\newcommand{\E}[1]{{\mathrm{E}\left[#1\right]}}
\newcommand{\PP}[1]{\mathrm{P}\left( #1 \right)}
\newcommand{\Var}[1]{{\mathrm{Var}\left(#1\right)}}
\newcommand{\Cov}[1]{{\mathrm{Cov}[#1]}}
\begin{document}

\RUNTITLE{Non-Stationary Queues with Batch Arrivals}

\TITLE{
Non-Stationary Queues with Batch Arrivals
}

\ARTICLEAUTHORS{%
\AUTHOR{Andrew Daw, Brian Fralix, and Jamol Pender}
\AFF{University of Southern California Marshall School of Business, Clemson University School of Mathematical and Statistical Sciences, and Cornell University School of Operations Research and Information Engineering}
} 

\KEYWORDS{Non-Stationarity, Queueing Theory, Infinite Server Queues, Batch Arrivals, General Service, Decomposition, Shot-Noise}

\ABSTRACT{
Motivated by applications that involve setting proper staffing levels for multi-server queueing systems with batch arrivals, we present a thorough study of the queue-length process $\{Q(t); t \geq 0\}$, departure process $\{D(t); t \geq 0\}$, and the workload process $\{W(t); t \geq 0\}$ associated with the M$_{t}^{B_{t}}$/G$_{t}$/$\infty$ queueing system. With two fundamental assumptions of (non-stationary) Poisson arrivals and infinitely many servers, we otherwise maintain a highly general model, in which the service duration and batch size distributions may depend on time and, moreover, where the service durations within a batch may be arbitrarily dependent. Nevertheless, we find that the Poisson and infinite server assumptions are enough to show that for each $t > 0$, the law of $Q(t)$ is that of a weighted sum of mutually independent Poisson random variables. We further invoke this type of decomposition to derive various joint Laplace-Stieltjes transforms associated with the queue-length and departure processes. Next, we study the time-dependent behavior of the workload process, and we conclude by establishing almost sure convergence of the queue-length and workload processes (when properly scaled) to two different shot-noise processes, elevating the weak convergence results shown previously.
}

\maketitle

%
%
%
%
%
%
%



\section{Introduction}

The $M_{t}/G_{t}/\infty$ queueing system is arguably the most tractable time-varying queue studied in the literature, and it is described as follows.  Customers arrive to an area, consisting of infinitely many servers, in accordance to a non-homogeneous Poisson process with points $\{T_{n}\}_{n \geq 1}$ and arrival rate function $\lambda : [0, \infty) \rightarrow [0, \infty)$. Then, if a customer arrives to the system at time $t$, it brings with it a random amount of work having cumulative distribution function (CDF) $F_{t}$ for processing.  We assume that the $m^\text{th}$ arrival to the system occurs at time $T_{m}$, and it brings an amount of work $S_{m}$ for processing:  hence, conditional on $T_{m}$, the CDF of $S_{m}$ is $F_{T_{m}}$.  Finally, we let $\Lambda: [0, \infty) \rightarrow [0, \infty)$ denote the mean measure associated with the arrival process, where for each $t \geq 0$,
\begin{align} \Lambda(t) := \int_{0}^{t}\lambda(s)\mathrm{d}s.
\label{compDef}
\end{align}
For each real $t \geq 0$, let $Q(t)$ denote the number of customers present in the system at time $t$.  Generally $\{Q(t); t \geq 0\}$ is not a Markov process, yet it is well-known that when $Q(0) = 0$ (or when the law of $Q(0)$ is Poisson) the marginal distributions of $\{Q(t); t \geq 0\}$ are Poisson distributed:  more particularly, assuming $Q(0) = 0$ with probability one, it can be shown that for each $t > 0$,
\begin{eqnarray} \label{classicMGinfty}
 \mathbb{P}(Q(t) = k) = \frac{\left(\int_{0}^{t}\overline{F}_{s}(t-s)\lambda(s)\mathrm{d}s\right)^{k}e^{-\int_{0}^{t}\overline{F}_{s}(t-s)\lambda(s)\mathrm{d}s}}{k!}
  \end{eqnarray}
   for each integer $k \geq 0$, where $\overline{F}_{s}(u) := 1 - F_{s}(u)$ for each $u \geq 0$.

Formula (\ref{classicMGinfty}) can be proven in at least two different ways.  One approach involves making use of a time-dependent thinning property of non-homogeneous Poisson processes:  given a fixed $t > 0$, we say that if an arrival occurs at time $s \in (0,t]$, we `count' it with probability $p_{t}(s) := \overline{F}_{s}(t-s)$, independently of all other points in $(0,t]$.  Then $Q(t)$ is simply the number of counted points in $(0,t]$, which is Poisson distributed with mean
\begin{align*} \int_{0}^{t}\overline{F}_{s}(t-s)\lambda(s)\mathrm{d}s. \end{align*}
Another way to prove (\ref{classicMGinfty}) is to simply note that $\{(T_{n}, S_{n})\}_{n \geq 1}$ correspond to the points of a spatial Poisson process on $\mathbb{R}_{+}^{2}$, whose mean measure $\mu$ satisfies
\begin{eqnarray*}
 \mu((a,b] \times C) = \int_{a}^{b}\int_{C}\mathrm{d}F_{s}(u)\lambda(s)\mathrm{d}s
 \end{eqnarray*}
for each $a,b \in [0, \infty)$ satisfying $a < b$, and for each Borel measurable subset $C$ of $[0, \infty)$, where $\mathrm{d}F_{s}(u)$ denotes Lebesgue-Stieltjes integration with respect to the CDF $F_{s}$. {Once this has been established, $Q(t)$ corresponds to the number of points among $\{(T_{n}, S_{n})\}_{n \geq 1}$ found in the set $\{(x,y):  0 \leq x \leq t, x + y > t\}$.}  Our primary objective herein is to illustrate how the tractability of theses ideas translates to non-stationary, infinite-server queueing systems that receive arrivals in batches rather than in individual increments.

The research literature contains a large body of work addressing batch/bulk queueing systems.  To the best of our knowledge, the first study featuring queues with batch arrivals is that of \citet{miller1959contribution}.  Since then, many other papers have been written that feature a study of queues with batch arrivals that operate under various different conditions:  see for example  \citet{foster1964batched, shanbhag1966infinite, brown1969some, holman1983service, fakinos1984, chatterjeemukherjee1989, lucantoni1991new, takagi1991priority, economoufakinos1999, masuyama2002analysis, liu1993autocorrelations, lee1995batch, daw2018distributions}.   Later work has expanded the concept to a variety of related models, including priority queues and queues with server vacations.  There are other papers in the literature that establish heavy traffic limit theorems for queues with batch arrivals: examples include \citet{chiamsiri1981diffusion, pang2010two, pang2012infinite}.  These papers show that under certain conditions, one can approximate a properly-scaled queue length process with a diffusion process---such as Brownian motion and Ornstein-Uhlenbeck processes---and also show that these approximations can be applied to even multi-server and non-Markovian queues.  

A recent application of batch queueing models is in the space of cloud-based data processing. In this case, the batches arriving to the system are collections of jobs submitted simultaneously. These jobs are then served by each being processed individually and returned. For more discussion, detailed models, and specific analysis for this setting, see works such as \citet{lu2011join, pender2016law, xie2017pandas, yekkehkhany2018gb} and references therein.  Another relevant application is in infectious disease modeling such as COVID-19, see for example \citet{kaplan2020covid, morozova2020model, palomo2020flattening}.  In this setting, the results for patients who potentially have COVID-19 arrive in a large batch to be processed at a facility.  Moreover, the data that we observe from COVID-19 is also of batch form as counts are made daily.  Finally, an emerging application of batch queues is in context of autonomous vehicles moving in platoons (batches) down highways and roads, e.g.~\citet{mirzaeian2018queueing, hampshire2020beyond}. Such applications also serve as the inspiration for the batch arrival queue staffing problem studied by \citet{daw2021staff}.

In this paper, we build upon ideas from three key prior works, \citet{economoufakinos1999}, \citet{de2017shot}, and \citet{daw2018distributions}, yielding distributional understanding of an essentially fully general infinite server queueing model with batch arrivals according to a non-stationary Poisson process. The earliest of the cornerstone concepts in this stream come from \citet{economoufakinos1999}, in which the authors find the probability generation function for the queue length and departure processes of the $M_t^{B}/G/\infty$ system. \citet{economoufakinos1999} recognized that this expression could hold for service times that are dependent within batches and independent across batches, possibly including different distributions of service for different types of customers within a batch. That paper itself builds on similar results from \citet{fakinos1984} for systems in which the service distributions are i.i.d.~(i.e., the $M_t^B/GI/\infty$ where $GI$ is meant to emphasize the independence assumption relative to $G$). However, no distributional equivalence was identified within these probability generation functions. By comparison, \citet{daw2018distributions} uncovered that this same idea can show that the random variable for the steady-state $M^B/GI/\infty$ queue length can be decomposed into sum of scaled Poisson random variables. However, the authors did not consider the departure or workload processes. In fact, of these three prior works, only \citet{de2017shot} consider the workload process. These authors study the $M_t^{B_t}/M/\infty$ system and connect both the queue length and the workload processes to shot-noise processes through the batch scaling limit, including on a process level through weak convergence of the finite-dimensional distributions. Their proof approach for the limit relies on Markov process theory through the convergence of generators. \citet{daw2018distributions} also considered the pointwise batch scaling limit, but only characterized the limiting generating function and did not offer any interpretation of the resulting stochastic process.

With these three prior works in mind, our goal here is to unite and extend these results to a highly general setting. That is, we study the $M_t^{B_t}/G_t/\infty$ system, allowing every distribution to potentially depend on time. Moreover, services can be arbitrarily dependent within batches and  may reflect different customer populations. We show that the distributional equivalence to a sum of scaled (independent) Poisson random variables (i.e., the idea from \citet{daw2018distributions}) holds in this fully general setting, both for the queue length process and the departure process.
This leads to generalizations of the transform functions provided by \citet{economoufakinos1999}, {as our usage of a nonstationary thinning technique allows us to derive transforms that provide information on various finite-dimensional distributions associated with both $\{Q(t); t \geq 0\}$ and $\{D(t); t \geq 0\}$}.  {We then use a different technique to study the workload process of this infinite-server queue, and we conclude by showing} that the batch scaling limits first identified by \citet{de2017shot} can be elevated{, for both the queue-length process and the workload process,} to almost sure convergence.
To the best of our knowledge, this is the first strong convergence result for a batch scaling limit.

Just like how many of the foremost benefits of the $M_t/G_t/\infty$ queue lie in the model's tractability for analysis and approximation of similar systems with limited capacity, we believe our results may be quite valuable to several interesting directions of related research. For example, consider the recent work on multi-server jobs, meaning queueing systems in which collections of jobs arrive together and also have a requirement that they must start together (see, e.g.,~\citet{rumyantsev2017stability,afanaseva2020stability,grosof2020stability,weng2020achieving,hong2021sharp,wang2021zero}, and references therein). The simultaneous start requirement is a salient model feature relative to batch arrival many server queues, but if there were infinitely many servers available then these models reduce to one another.  The multi-server jobs model is known to be quite challenging to analyze, so we offer our following analysis for any aid or insight into this problem through this more amenable, unlimited capacity setting. Naturally, much like the analog to many server queues, the infinite server queue provides an idealized bound for what is achievable. Similarly, batch arrival queues may also hold insight towards the design and management of system architectures for microservices, which have become increasingly common in cloud-based service (see, e.g.,~\citet{ueda2016workload,gan2018architectural,gan2021sage,lazarev2021dagger}). As a representative example, consider rideshare, where Uber describes their service as actually being comprised of over 2,200 microservices. (\citet{gluck2020introducing,chabbi2021crisp}). When a rider requests a ride, this  triggers a collection of related sub-tasks, such as those supporting matching, routing, billing, and user interfaces for both the passenger and the driver. In this case, the infinite server batch arrival queue provides a tractable model to analyze these microservices under ideal levels of support.


%
%


\subsection{Organization}

This paper is organized as follows. In Section~\ref{qDecompSec} we will first precisely define the queueing system and the three associated stochastic processes that serve as focal points of this work: the queue length, $Q(t)$, the departure process, $D(t)$, and the workload process, $W(t)$. Then, the remainder of this Section is devoted to establishing and exploring the decomposition of the queue into a weighted sum of mutually independent Poisson random variables. We provide these decompositions both at a single point in time and across a collection of epochs, yielding finite-dimensional perspectives. In Section~\ref{transformSec}, we use these decompositions to analyze  various transforms and performance metrics of the models, including the Laplace-Stieltjes transform (LST) and the auto-covariance, demonstrating the practicality of the Poisson random sum representation for proof methodology. We obtain these quantities across a variety of settings, including single and finitely many dimensions, and we provide examples of cases when the resulting expressions are quite simple. Finally, in Section~\ref{limitSec} we consider the batch scaling limits shown in the literature, and provide, to the best of our knowledge, the first {result addressing} almost sure convergence of batch arrival queues to shot-noise processes. In Section~\ref{concSec}, we discuss our results and conclude.

\section{Defining and Decomposing the Queueing Model}\label{qDecompSec}


In this paper, we will study three stochastic processes associated with the batch arrival infinite server queue: the queue length, departure, and workload processes. Throughout we consider the infinite-server queueing system $M_{t}^{B_{t}}/G_{t}/\infty$, where batches of customers arrive in accordance to a non-homogeneous Poisson process $\{A(t); t \geq 0\}$ with rate function $\lambda: [0, \infty) \rightarrow [0, \infty)$.  We denote the size (meaning number of customers) of the batch arriving at time $t$ as $B_{t}$, which is a random variable whose CDF may depend on $t$, and we assume that the amounts of work brought by customers within the batch has a joint distribution that may also depend on $t$.  In general, we allow amounts of work within a given batch to be arbitrarily dependent. No assumptions are placed on the distributions of work within a batch, but all batches are independent of each other.  We associate with this infinite-server system the stochastic processes $\{Q(t); t \geq 0\}$ and $\{D(t); t \geq 0\}$, where the queue length $Q(t)$ denotes the number of customers present in the system at time $t$, and where the departure process $D(t)$ denotes the number of service completions that occur over the interval $(0,t]$. Furthermore, we will let $\{W(t); t \geq 0\}$ be the workload process, where $W(t)$ denotes the total remaining service time of the customers present in the system at time $t$.

It will be of great use for us to carefully index and order the service durations by each customer and batch. For each integer $n \geq 1$, and each $j \in \{1,2,\ldots,n\}$, let $S_{j,n}(s)$ denote the amount of work brought by the $j^\text{th}$ customer contained in the batch of size $n$ that arrives at time $s$, and let $S_{j:n}(s)$ denote the $j^\text{th}$ smallest amount of work found in the same batch. Throughout the paper, we follow the convention that $S_{0:n}(s) = 0$ with probability one, and $S_{n+1:n}(s) = \infty$ with probability one.

\begin{figure}[h]
\centering
\includegraphics[width=.7\textwidth]{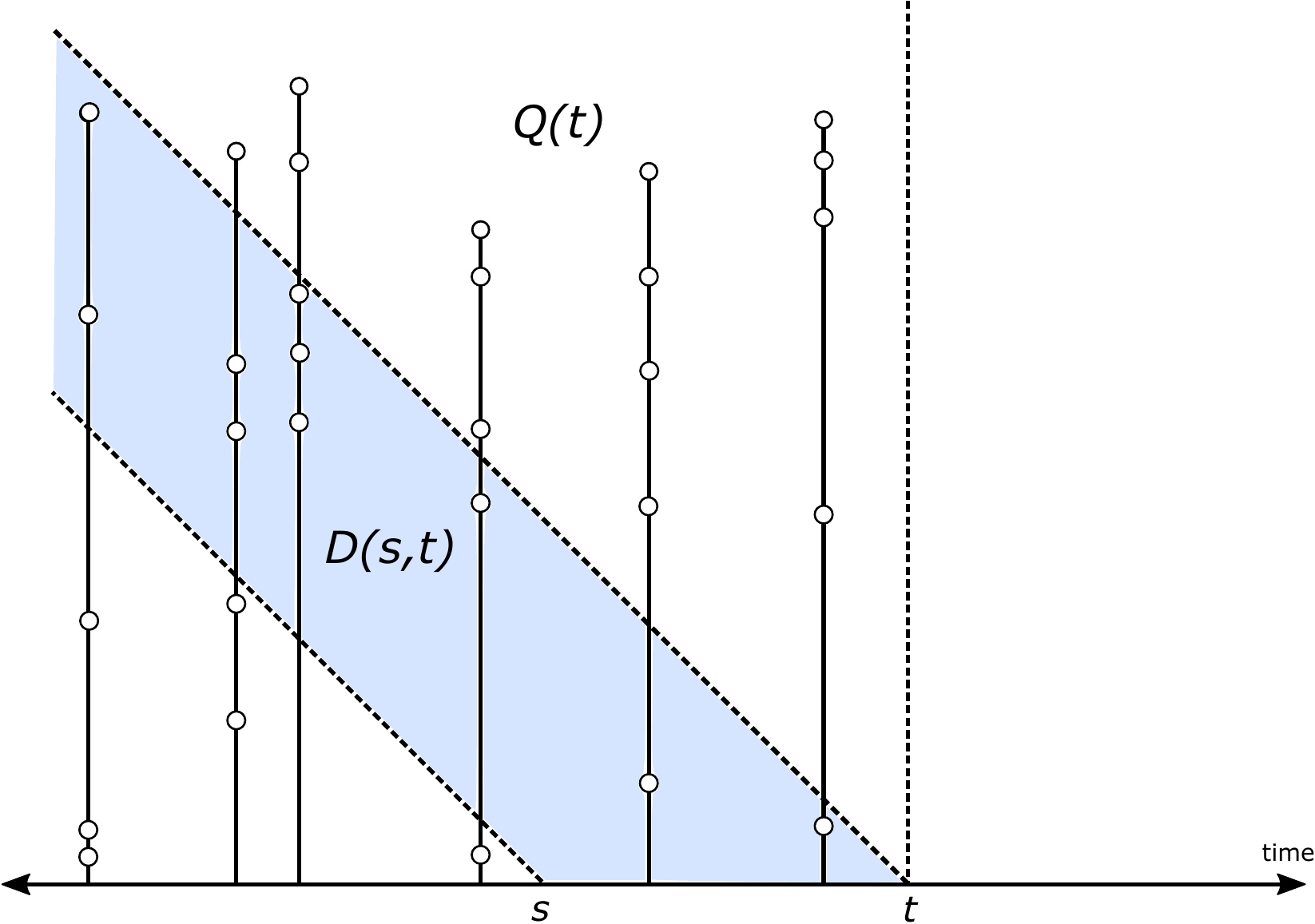}
\caption{A visualization of the $\mathbf{M_t^{B_t}/G_t/\infty}$  queueing system using Poisson random measures in the style of \citet{eick1993physics}, which motivates the thinning-based decomposition of the queue.}\label{thinFig}
\end{figure}

To motivate the sequel subsection's queueing decomposition visually, let us adapt the elegant Poisson random measure perspective shown in Figure 1 of \citet{eick1993physics}. In this diagram, the solid vertical lines mark the times of arrivals in the Poisson process. The dots along these lines then denote the lengths of the service durations within each arriving batch. Of course, by comparison to the $M_t/G/\infty$ queue considered by \citet{eick1993physics}, the batch arrivals mean that there are multiple service durations for each arrival epoch in the Poisson process. Because a customer is still in the system if her arrival time plus her service duration is greater than the current time, the total queue length is the number of points above the $45^\circ$ line. In this way, we can classify all of the arrival epochs up to $t$ by the number of jobs within a batch that remain in the system at time $t$, meaning the number of points above this line. This classification of the arrival times yields our thinning of the Poisson process.

\subsection{Decomposition into a Sum of Scaled Poissons}\label{thinSec}


To formally define the decomposition of the queue, let us introduce a family of random variables sprouting from the definition of the customer-batch-indexed amounts of work random variables, $S_{j:n}(s)$. For each integer $j \in \{1,2,\ldots,n\}$, and each real $a,b$ satisfying $0 \leq a < b \leq t$, the random variable
\begin{align*}
Y_{j;n}(a,b] := \int_{(a,b]}\mathbf{1}(B_{s} = n, S_{j-1:n}(s) \leq t-s, S_{j:n}(s) > t-s)A(\mathrm{d}s) ,
\end{align*}
counts the number of batches arriving in the interval $(a,b]$ that are of size $n$, and are such that precisely $j-1$ customers within the batch have departed by time $t$, and precisely $n - (j-1)$ customers from this batch are still present in the system at time $t$.  We assign each such batch with the label $(j;n)$.  Using the fact that both the size of each batch and the amounts of work present in each batch are independent of all other batches, it follows from the classic thinning property of non-homogeneous Poisson processes that the random variables $\{Y_{j;n}(a,b]\}_{1 \leq j \leq n}$ are independent, Poisson random variables, where the mean of $Y_{j;n}(a,b]$ is given by
\begin{align}
\mathbb{E}[Y_{j;n}(a,b]] = \int_a^b \mathcal{P}_{s}(B_{s} = n, S_{j-1: n}(s) \leq t-s, S_{j:n}(s) > t-s)\lambda(s) \mathrm{d}s
,
\label{mainIntSingle}
\end{align}
with $\mathcal{P}_{s}$ as the probability measure associated with the batch of customers that arrive at time $s$.

Let us briefly comment on the integral in Equation~\eqref{mainIntSingle}. Here we have presented the mean of $Y_{j;n}(a,b]$ in a generality befitting of the weak assumptions we have made so far. This may be particularly true for the probability of the event inside this integral. For intuitions sake, let us give some example of how to compute this probability. Of course, if one has access to the conditional density of the order statistics, then this probability can be expressed accordingly. That is, letting $f_{n,s}(x_1, x_2, \dots, x_n)$ be the conditional density of the ordered service durations $S_{1:n}(s), S_{2:n}(s), \dots, S_{n:n}(s)$ given the batch arriving at time $s$ is of size $n$, then the probability of this event is given by the integral
\begin{align*}
\mathcal{P}_{s}\left(B_{s} = n, S_{j-1: n}(s) \leq \theta, S_{j:n}(s) > \theta\right)
&=
\int_0^{\theta} \dots \int_{x_{j-2}}^\theta \int_{\theta}^{\infty}  \dots \int_{x_{n-1}}^\infty
f_{n,s}(x_1, \dots, x_{j-1}, x_j, \dots, x_n)
\\&\qquad\cdot
\mathrm{d}x_1 \dots \mathrm{d}x_{j-1} \mathrm{d}x_j \dots \mathrm{d}x_n\,
\mathcal{P}_s(B_s = n)
.
\end{align*}
On the other hand, if $S_{j,n}(s)$ for $j \in \{1, 2, \dots, n\}$ are conditionally independent and identically distributed given the arrival epoch $s$ and resulting batch size $B_s = n$, then the probability of the event in Equation~\eqref{mainIntSingle} can be even more simply expressed. Letting $F_{n,s}(x) = \mathcal{P}_s\left(S_{1,n}(s) \leq x \mid B_s = n\right)$ and $\bar{F}_{n,s}(x) = 1 - F_{n,s}(x)$ for $x \geq 0$,  conditionally i.i.d. service durations imply that
$$
\mathcal{P}_{s}\left(B_{s} = n, S_{j-1: n}(s) \leq \theta, S_{j:n}(s) > \theta\right)
=
{n \choose j - 1} \left(F_{n,s}(\theta) \right)^{j-1} \left(\bar{F}_{n,s}(\theta)\right)^{n-j+1}
\mathcal{P}_s(B_s = n)
.
$$
This binomial coefficient form is known to arise for order statistics of independent and identically distributions, see e.g.~\citet{ross2014introduction}. Absent particular structural assumptions for a given domain or application, it remains a challenging research problem to express probabilities of order statistics more specifically while maintaining generality. Thus,  we will adhere to the flexible form used in Equation~\eqref{mainIntSingle}, but we will also refer to these useful specific examples for added context as needed. Broadly speaking, no further assumptions are needed to achieve our following decomposition, nor are further assumptions needed to use the decomposition in proof methodologies, as we will show. Nevertheless, throughout the remaining sections we will comment on how the resulting expressions can be simplified when assumptions are made.

%

Now, these terms and the style of reasoning in Figure~\ref{thinFig} lead us to our first result, the decomposition of $Q(t)$ into a sum of independent, scaled Poisson random variables. Furthermore, by immediate consequence of the queue's decomposition, we can find an analogous representation for the departure process.

\begin{theorem} \label{SACTQandD}
For each $t > 0$,
\begin{eqnarray*}
Q(t) = \sum_{n=1}^{\infty}\sum_{j=1}^{n+1}(n - j + 1)Y_{j;n}(0,t], ~~~~~~~ D(t) = \sum_{n=1}^{\infty}\sum_{j=1}^{n+1}(j - 1)Y_{j;n}(0,t]
,
\end{eqnarray*}
where the random variables $\{Y_{j;n}(0,t] \mid j, n \in \mathbb{Z}_+, j \leq n\}$ are mutually independent.
\end{theorem}
\proof{Proof.} If a batch arriving in the interval $(0,t]$ is assigned label $(j;n)$, precisely $(n - j + 1)$ customers within that batch are still present at time $t$, and precisely $j-1$ of those customers have departed by time $t$.  Hence, the number of customers present in the system at time $t$ from a batch with label $(j;n)$ is $(n - j + 1)Y_{j;n}(0,t]$, and the number of departures over $(0,t]$ of customers from a batch with label $(j;n)$ is $(j-1)Y_{j;n}(0,t]$. Summing over all possible labels completes the proof.
\hfill\Halmos \endproof

\noindent \textbf{Remark} Readers may observe that we include the term $(0)Y_{n+1;n}(0,t]$, as well as the term $(0)Y_{1;n}(0,t]$ in $Q(t)$ and $D(t)$, respectively, which seems unnecessary since both terms are clearly zero with probability one.  However, following this convention will make it easier later to express various joint Laplace-Stieltjes transforms associated with both $\{Q(t); t \geq 0\}$ and $\{D(t); t \geq 0\}$.

\subsection{Contrasting with the Queue Length under Individual Arrivals}

Now that we have shown the Poisson decomposition of the queue length and departure processes, we can use the representation to analyze the covariance between the two processes.

\begin{proposition} \label{CovQandD}
For each $t > 0$, the covariance of $Q(t)$ and $D(t)$ is given by
\begin{eqnarray*}
\mathrm{Cov}(Q(t), D(t)) = \sum_{n=1}^{\infty}\sum_{j=1}^{n+1}(n-j+1)(j-1)\int_{0}^{t}\mathcal{P}_{s}(B_{s} = n, S_{j-1:n}(s) \leq t-s, S_{j:n}(s) > t-s)\lambda(s)\mathrm{d}s.
 \end{eqnarray*}
\end{proposition}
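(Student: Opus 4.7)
The plan is to exploit the representation from Theorem~\ref{SACTQandD} together with the independence and Poisson structure of the family $\{Y_{j;n}(0,t]\}_{n \geq 1,\, 1 \leq j \leq n+1}$ established via the thinning argument that preceded it. Since both $Q(t)$ and $D(t)$ are written as linear combinations of the same collection of Poisson random variables, the covariance computation reduces entirely to variances.

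More precisely, I would first invoke bilinearity of covariance (and justify interchange with the infinite sums using monotone convergence/dominated convergence based on the fact that $\mathbb{E}[Q(t)]$ and $\mathbb{E}[D(t)]$ are both finite under the standing assumption that $\lambda$ is locally integrable and $\mathbb{E}[B_s] < \infty$, so $\sum_{n,j}(n-j+1)\mathbb{E}[Y_{j;n}(0,t]]$ and its $D(t)$-analogue converge). This gives
\begin{align*}
\mathrm{Cov}(Q(t), D(t)) = \sum_{n=1}^{\infty}\sum_{j=1}^{n+1}\sum_{n'=1}^{\infty}\sum_{j'=1}^{n'+1} (n-j+1)(j'-1)\,\mathrm{Cov}\!\left(Y_{j;n}(0,t], Y_{j';n'}(0,t]\right).
\end{align*}

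Next, I would invoke the independence of the $Y_{j;n}(0,t]$'s across distinct labels $(j;n)$, which was the key conclusion of the thinning construction, to collapse the double sum: all cross terms vanish, leaving only the diagonal $n=n'$, $j=j'$. Since each $Y_{j;n}(0,t]$ is Poisson, its variance equals its mean, which was computed earlier to be
\begin{align*}
\mathbb{E}[Y_{j;n}(0,t]] = \int_{0}^{t}\mathcal{P}_{s}(B_{s} = n,\, S_{j-1:n}(s) \leq t-s,\, S_{j:n}(s) > t-s)\lambda(s)ds.
\end{align*}
Substituting this into the diagonal sum immediately yields the claimed formula.

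I do not expect any real obstacle here. The only item requiring any care is the interchange of the covariance operation with the infinite sums; this is handled by noting that the summands $(n-j+1)(j-1)$ vanish at $j=1$ and $j=n+1$ (so the sum is effectively supported on $2 \leq j \leq n$, matching the nonnegative ranges where $Q$ and $D$ both draw contributions), and that the positive quantities $\sum_{n,j}(n-j+1)\mathbb{E}[Y_{j;n}(0,t]] = \mathbb{E}[Q(t)]$ and $\sum_{n,j}(j-1)\mathbb{E}[Y_{j;n}(0,t]] = \mathbb{E}[D(t)]$ are finite, so Cauchy--Schwarz (applied termwise, or to the $L^2$ limits defining $Q(t)$ and $D(t)$) legitimizes the rearrangement.
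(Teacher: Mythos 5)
Your proposal is correct and follows essentially the same route as the paper's own proof: expand both $Q(t)$ and $D(t)$ via the decomposition of Theorem \ref{SACTQandD}, use bilinearity and the independence of the $Y_{j;n}(0,t]$ to keep only diagonal terms, and then replace each variance by the Poisson mean. The only difference is that you explicitly justify interchanging covariance with the infinite sums, a point the paper passes over silently; this is a reasonable addition but not a change of method.
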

\proof{Proof.}
The proof of this result exploits properties of Poisson processes and the decomposition of the queue length and departure process given in Theorem~\ref{SACTQandD}. By the sum of Poisson's representation and the definition of covariance we have that
\begin{align*}
\mathrm{Cov}(Q(t), D(t)) &= \mathrm{Cov} \left( \sum_{n=1}^{\infty}\sum_{j=1}^{n+1}(n - j + 1)Y_{j;n}(0,t], \sum_{n=1}^{\infty}\sum_{j=1}^{n+1}(j - 1)Y_{j;n}(0,t]\right) \\
&= \sum_{n=1}^{\infty}\sum_{j=1}^{n+1}(n - j + 1) (j - 1) \mathrm{Cov} \left( Y_{j;n}(0,t], Y_{j;n}(0,t] \right)
.
\end{align*}
Because the $Y$'s are independent, the covariance between separate variables is 0 and hence this reduces to simply summing over the variances. Since the variance of a Poisson is simply its mean, we find
\begin{align*}
\mathrm{Cov}(Q(t), D(t))
&= \sum_{n=1}^{\infty}\sum_{j=1}^{n+1}(n - j + 1) (j - 1) \mathrm{Var} \left( Y_{j;n}(0,t]\right) \\
&= \sum_{n=1}^{\infty}\sum_{j=1}^{n+1}(n - j + 1) (j - 1) \mathrm{E} \left[ Y_{j;n}(0,t] \right] \\
 &= \sum_{n=1}^{\infty}\sum_{j=1}^{n+1}(n-j+1)(j-1)\int_{0}^{t}\mathcal{P}_{s}(S_{j-1:n}(s) \leq t-s, S_{j:n}(s) > t-s)\mathcal{P}_{s}(B_{s} = n)\lambda(s)\mathrm{d}s
 ,
\end{align*}
completing the proof.
\hfill\Halmos \endproof

  \begin{figure}[h]
\centering
\includegraphics[width=.7\textwidth]{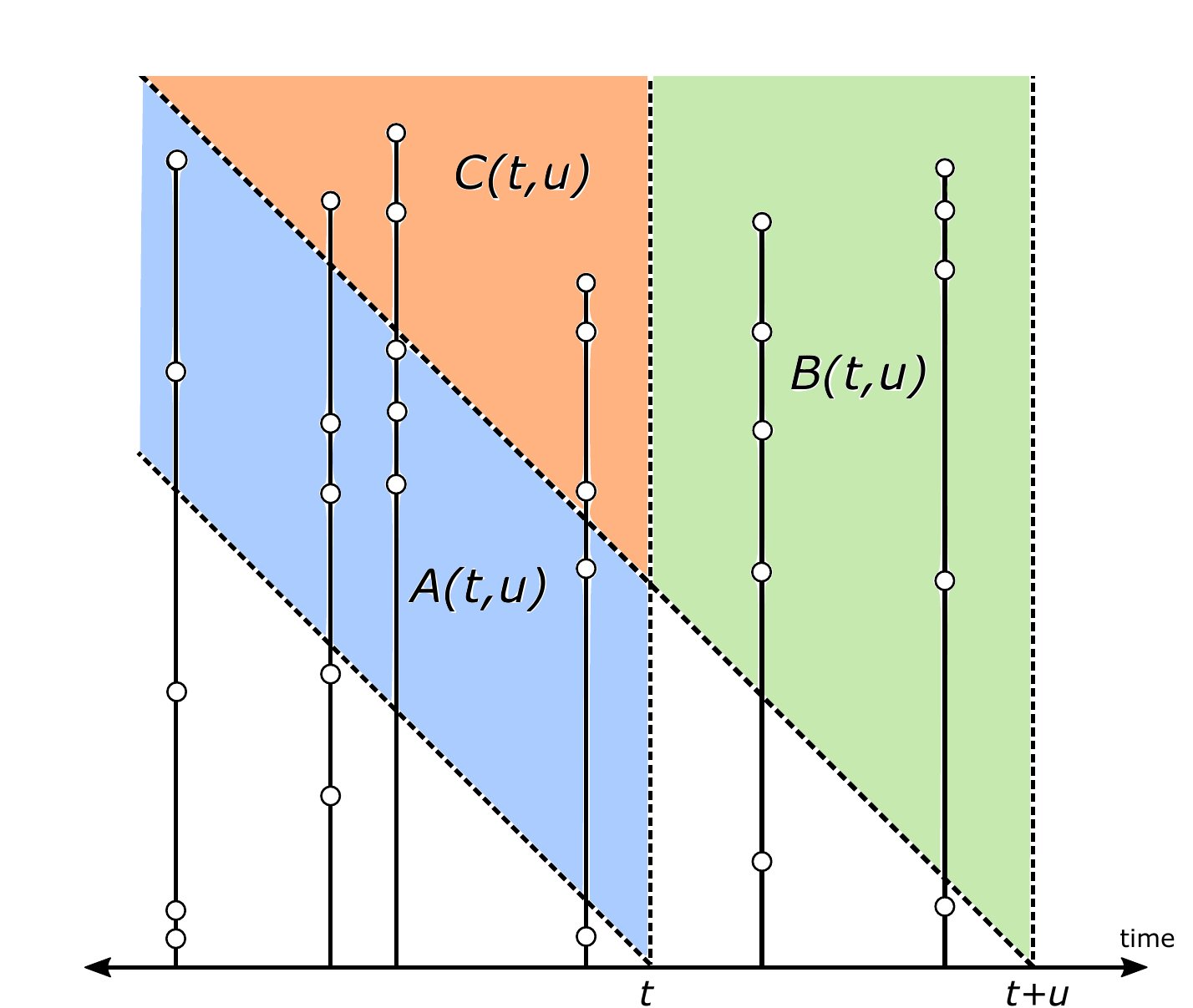}
\caption{A Poisson random measure perspective on the overlap of customers in a $\mathbf{M_t^{B_t}/G_t/\infty}$  queueing system inspected at times $\boldsymbol{t}$ and $\boldsymbol{t+u}$.}\label{figABC}
\end{figure}

Proposition \ref{CovQandD} shows that in general, $Q(t)$ and $D(t)$ are positively correlated, but when all batches are of size one with probability one, the result shows that
\begin{eqnarray*}
\mathrm{Cov}(Q(t), D(t)) = 0.
 \end{eqnarray*} which is a well-known result that is addressed in e.g. \citet{eick1993physics}. The same Poisson random measure visualizations in Figure~\ref{thinFig} can reveal this dependence as well and demonstrate the difference between the batch and individual arrival settings, as shown in Figure~\ref{figABC}. Let time $t$ and offset $u$ be fixed. Following the decomposition used in \citet{eick1993physics}, let us introduce the quantities $A(t,u)$, $B(t,u)$, and $C(t,u)$ defined such that $A(t,u)$ is the number of entities that arrive by time $t$ and depart in the interval $[t, t+u]$, $B(t,u)$ is the number of entities arriving in $[t, t+u]$ that remain in the system at time $t + u$, and $C(t,u)$ is the number of entities arriving by time $t$ that remain in the system at time $t+u$. Then, by definition we have that  $Q(t) = A(t,u) + C(t,u)$ and $Q(t+u) = B(t,u) + C(t,u)$. By the independent increments of the Poisson process, we can note that $B(t,u)$ is independent from $A(t,u)$ and $C(t,u)$. However, unlike the $M_t/G/\infty$ model studied in \citet{eick1993physics}, $A(t,u)$ is not independent from $C(t,u)$. This is a consequence of the batch arrivals, as there is dependency between the ordered service times within one batch. Using these definitions, we can express  the auto-covariance  in terms of these regions as
 \begin{align*}
 \Cov{Q(t), Q(t+u)}  &= \E{(A(t,u) + C(t,u))(B(t,u) + C(t,u))}
- \E{A(t,u) + C(t,u)}\E{B(t,u) + C(t,u)}
 \\
 &= \Cov{A(t,u),C(t,u)}  + \Var{C(t,u)}
 .
 \end{align*}

\subsection{Recognizing the Decomposition in Finite-Dimensional Perspectives}

This thinning technique can also be used to derive the joint finite-dimensional distributions of both $\{Q(t); t \geq 0\}$ and $\{D(t); t \geq 0\}$, but doing so requires a more elaborate thinning procedure.  Given a collection of real numbers $\{t_{\ell}\}_{\ell = 1}^{m}$ satisfying $0 < t_{1} < t_{2} < \ldots < t_{m}$ and an integer $n \geq 1$, we define the random variable $Y_{j_{k}, j_{k+1}, \ldots, j_{m};n}$ as
\begin{eqnarray}
Y_{j_{k}, j_{k+1}, \ldots, j_{m};n}(a,b] := \int_{(a,b]}\mathbf{1}\left(\bigcap_{\ell = k}^{m}\{S_{j_{\ell} - 1:n}(s) \leq t_{\ell} - s, S_{j_{\ell}:n}(s) > t_{\ell} - s\}\right)A(\mathrm{d}s)
\label{fdYdef}
.
\end{eqnarray}
Here, $Y_{j_{k}, j_{k+1}, \ldots, j_{m};n}(a,b]$ should be interpreted as the number of batches of size $n$ that arrive in the interval $(a,b]$ satisfying the property that for each $\ell \in \{k,k+1,\ldots,m\}$, exactly $j_{\ell} - 1$ customers from the batch have departed from the system at time $t_{\ell}$ (meaning also that exactly $n - j_{\ell} + 1$ customers from the batch are still present in the system at time $t_{\ell}$).  Using well-known thinning properties of non-homogeneous Poisson processes, we can say that $Y_{j_{k}, j_{k+1}, \ldots, j_{m}}(a,b]$ is a Poisson random variable that satisfies
\begin{align}
\mathbb{E}[Y_{j_{k}, j_{k+1}, \ldots, j_{m};n}(a,b]] = \int_{a}^{b}\mathcal{P}_{s}\left(\{B_{s} = n\} \cap \bigcap_{\ell = k}^{m}\{S_{j_{\ell} - 1:n}(s) \leq t_{\ell} - s, S_{j_{\ell}:n}(s) > t_{\ell} - s\}\right)\lambda(s)\mathrm{d}s.
\label{mainIntMulti}
\end{align}
These random variables contribute value to each $Q(t_{\ell})$, as well as to each $D(t_{\ell})$ value, for $1 \leq \ell \leq m$. Theorem~\ref{SACTQandD-FD} provides the finite-dimensional analog to the single dimensional decomposition in Theorem~\ref{SACTQandD}, and the proof follows similarly.

\begin{theorem} \label{SACTQandD-FD}
Given $m \in \mathbb{Z}_+$, let $\mathcal{J}_{\ell,m,n} = \{ j_\ell, \ldots, j_m \in \{1, \dots, n+1\} \mid j_{i-1} \leq j_i \leq n + 1 ~ \forall ~ \ell+1 \leq i \leq m\}$ for each $n \in \mathbb{Z}_+$. Then, for every collection of $m$ epochs $0 < t_1 < t_2 < \ldots < t_m$,
\begin{align*}
Q(t_i)
&=
\sum_{n=1}^\infty
\sum_{\ell=1}^i
\sum_{j_\ell, \dots, j_m \in \mathcal{J}_{\ell,m,n}}
(n - j_i + 1) Y_{j_\ell, j_{\ell+1}, \dots, j_m; n}(t_{\ell-1}, t_\ell]
&&
\forall ~ i \in \{1, \dots, m\}
,
\end{align*}
and
\begin{align*}
D(t_i)
&=
\sum_{n=1}^\infty
\sum_{\ell=1}^i
\sum_{j_\ell, \dots, j_m \in \mathcal{J}_{\ell,m,n}}
(j_i - 1) Y_{j_\ell, j_{\ell+1}, \dots, j_m; n}(t_{\ell-1}, t_\ell]
&&
\forall ~ i \in \{1, \dots, m\}
,
\end{align*}
where all the Poisson random variables are mutually independent.
\end{theorem}

As a consequence of this finite-dimensional decomposition, we can see that the Poisson random variables defined in Equation~\eqref{fdYdef} elucidate the dependence between observations of the process across epochs. That is, because the $Y$'s are mutually independent random variables, the precise dependence of the queue (and departure process) at different moments in time is distilled to any shared copies of particular $Y$ variables that appear in each epoch's decomposition. We will explore this concept further through auto-covariances and finite dimensional transforms that we provide in the next section.

As we did for the single dimensional case, let us briefly comment on the integral within Equation~\eqref{mainIntMulti}. As before, if the conditional joint density of the ordered service durations is in hand, then we can express the probability of this event accordingly. That is,
\begin{align*}
&
\mathcal{P}_{s}\left(\{B_{s} = n\} \cap \bigcap_{\ell = k}^{m}\{S_{j_{\ell} - 1:n}(s) \leq \theta_\ell, S_{j_{\ell}:n}(s) > \theta_\ell\}\right)
\\
&
\quad=
\int_0^{\theta_k} \dots \int_{x_{j_k-2}}^{\theta_k} \int_{\theta_k}^{\theta_{k+1}} \int_{x_{j_k}}^{\theta_{k+1}}  \dots \int_{x_{j_m-2}}^{\theta_m} \int_{\theta_m}^{\infty} \dots \int_{x_{n-1}}^\infty
f_{n,s}(x_1, \dots, x_{j_k-1}, x_{j_k}, x_{j_k+1}, \dots, x_{j_m-1}, x_{j_m}, \dots, x_n)
\\&\qquad\cdot
\mathrm{d}x_1 \dots \mathrm{d}x_{j_k-1} \mathrm{d}x_{j_k} \mathrm{d}x_{j_k+1} \dots \mathrm{d}x_{j_m-1} \mathrm{d}x_{j_m} \dots \mathrm{d}x_n \,
\mathcal{P}_s(B_s = n)
.
\end{align*}
Furthermore, if the service durations are conditionally independent and identically distributed given then arrival epoch $s$ and batch size $B_s = n$, this probability can again be simply expressed. Here, we have that
\begin{align*}
&
\mathcal{P}_{s}\left(\{B_{s} = n\} \cap \bigcap_{\ell = k}^{m}\{S_{j_{\ell} - 1:n}(s) \leq \theta_\ell, S_{j_{\ell}:n}(s) > \theta_\ell\}\right)
\\
&\quad=
{n \choose {j_k - 1, j_{k+1} - j_k, \dots, j_m - j_{m-1}, n - j_m + 1}}
\left(F_{n,s}(\theta_k)\right)^{j_k-1}
\left(F_{n,s}(\theta_{k+1}) - F_{n,s}(\theta_{k})\right)^{j_{k+1}-j_k}
\\
&\qquad\cdot
\dots
\left(F_{n,s}(\theta_{m}) - F_{n,s}(\theta_{m-1})\right)^{j_{k+1}-j_k}
\left(\bar{F}_{n,s}(\theta_m)\right)^{n-j_m+1}
.
\end{align*}

\section{Transforms of the Stochastic Processes}\label{transformSec}

In addition to immediate benefits for simulation of the queueing model, the decompositions in Theorems~\ref{SACTQandD} and~\ref{SACTQandD-FD} are also quite valuable methodologically for analysis of the underlying stochastic processes. We will demonstrate this throughout the remainder of the paper. In this section, we will show this through analysis of transforms and various performance metrics of the models. Both for comparison's sake and for use downstream, we will obtain LST expressions for the process in two slightly different settings and through two different approaches: a classical conditional uniformity argument and the decomposition technique we have established in Theorem~\ref{SACTQandD-FD}. We begin with the conditional uniformity approach.


\subsection{Joint Transform of the Workload, Queue, and Departures}


We now present an alternative approach towards studying, for the $M_{t}^{B_{t}}/G_{t}/\infty$ queueing system, the joint LST of $W(t)$, $Q(t)$, and $D(t)$. By comparison to the sum-of-Poissons decomposition, here we keep track of {\em all} arrivals in $(0,t]$, then use indicator functions to describe $W(t)$, $Q(t)$, and $D(t)$ once we know when all arrivals occur in $(0,t]$.  Doing the calculations in this way allows us to more easily work with random batch sizes, and moreover this conditional uniformity approach allows us to analyze the workload process as well.


\begin{theorem} \label{JointLSTofQWD} For each $\alpha \geq 0$, each $\beta \geq 0$, and each $\gamma \geq 0$,
\begin{align} \label{JointLSTofQWDEq} \mathbb{E}[e^{-\alpha W(t) - \beta Q(t) - \gamma D(t)}] = \exp\left(-\int_{0}^{t}\mathcal{E}_{s}\left[1 - e^{-\sum_{j=1}^{B_{s}}[\beta\mathbf{1}(S_{j}(s) > t-s) + \gamma\mathbf{1}(S_{j}(s) \leq t-s) + \alpha (S_{j}(s) - (t-s))^{+}]}\right]\lambda(s)\mathrm{d}s\right) \end{align} where $\mathcal{E}_{s}$ denotes expectation, conditional on having a batch arrival at time $s$.
\end{theorem}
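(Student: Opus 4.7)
The plan is to compute the joint Laplace-Stieltjes transform by first decomposing the three quantities $W(t)$, $Q(t)$, and $D(t)$ as sums over batches arriving in $(0,t]$, then leveraging the Laplace functional of the underlying non-homogeneous Poisson batch-arrival process. Specifically, for a batch arriving at time $s$ with size $B_s$ and service times $S_1(s), \ldots, S_{B_s}(s)$, that batch contributes $\mathbf{1}(S_j(s) > t-s)$ to $Q(t)$, $\mathbf{1}(S_j(s) \leq t-s)$ to $D(t)$, and $(S_j(s) - (t-s))^+$ to $W(t)$ for its $j$th customer. Summing over all batches in $(0,t]$, the exponent $\alpha W(t) + \beta Q(t) + \gamma D(t)$ becomes an additive functional of the Poisson point process of batch arrivals, where each point has an independent mark (the batch size together with service times) under the measure $\mathcal{P}_s$.

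To make this rigorous and avoid any appeal to a general marked Poisson functional formula, I would condition on $A(t) = n$ and use the classical order-statistics property of a non-homogeneous Poisson process: given $A(t) = n$, the arrival epochs $T_1, \ldots, T_n$ are distributed as order statistics of $n$ i.i.d. samples from the density $\lambda(s)/\Lambda(t)$ on $[0,t]$. Because the batch marks attached to distinct arrival epochs are mutually independent, the conditional expectation factors:
\begin{equation*}
\mathbb{E}\bigl[e^{-\alpha W(t) - \beta Q(t) - \gamma D(t)} \,\bigm|\, A(t) = n\bigr] = \left(\frac{1}{\Lambda(t)}\int_0^t \mathcal{E}_s\!\left[e^{-\sum_{j=1}^{B_s}[\beta\mathbf{1}(S_j(s) > t-s) + \gamma\mathbf{1}(S_j(s) \leq t-s) + \alpha(S_j(s)-(t-s))^+]}\right]\lambda(s)\,ds\right)^{\!n}.
\end{equation*}

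Multiplying by the Poisson probability $\Lambda(t)^n e^{-\Lambda(t)}/n!$ and summing over $n \geq 0$ collapses to an exponential:
\begin{equation*}
\mathbb{E}[e^{-\alpha W(t) - \beta Q(t) - \gamma D(t)}] = \exp\!\left(-\Lambda(t) + \int_0^t \mathcal{E}_s[e^{-(\cdots)}]\lambda(s)\,ds\right) = \exp\!\left(-\int_0^t \mathcal{E}_s\!\left[1 - e^{-(\cdots)}\right]\lambda(s)\,ds\right),
\end{equation*}
which is exactly \eqref{JointLSTofQWDEq}.

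The routine parts are the per-batch identification of the contributions to each of $W$, $Q$, $D$ and the final Poisson sum collapse. The main obstacle is a conceptual/bookkeeping one: one must be careful that the batch marks $(B_s, S_1(s), \ldots, S_{B_s}(s))$ attached to different arrival epochs are drawn independently from $\mathcal{P}_s$ (with $s$ itself random under the order-statistics representation), and that no assumption of independence \emph{within} a batch is needed because everything is handled inside the inner conditional expectation $\mathcal{E}_s[\cdot]$. Once that is granted, the factorization of the conditional expectation and the collapse to an exponential are standard, and no appeal to Proposition \ref{residualservices} is required—consistent with the author's remark that this approach is better suited to random batch sizes and general within-batch joint distributions.
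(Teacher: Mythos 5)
Your proposal is correct and follows essentially the same route as the paper's own proof: conditioning on $A(t)=n$, invoking the order-statistics representation of the non-homogeneous Poisson arrival epochs, factorizing the conditional expectation via independence of batch marks (with the symmetry of the integrand justifying the collapse of the ordered integral into the $n$th power of a single integral), and then summing the Poisson series into the exponential form of \eqref{JointLSTofQWDEq}.
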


\proof{Proof.} Conditioning on $A(t)$ yields
\begin{align} \label{JointLSTofQWDStep1} \mathbb{E}[e^{-\alpha W(t) - \beta Q(t) - \gamma D(t)}] = e^{-\Lambda(t)} + \sum_{m=1}^{\infty}\mathbb{E}[e^{-\alpha W(t) - \beta Q(t) - \gamma D(t)} \mid A(t) = m]\frac{\Lambda(t)^{m}e^{-\Lambda(t)}}{m!}, \end{align} where the compensator $\Lambda(t)$ is as defined in Equation~\eqref{compDef}. Next, recall that conditional on $A(t) = m$, the arrival times $T_{1}, T_{2}, \ldots, T_{m}$ are equal in distribution to the order statistics associated with $m$ i.i.d.~absolutely continuous random variables:  the conditional joint PDF of $T_{1}, T_{2}, \ldots, T_{m}$, given $A(t) = m$, is known to be
\begin{align*} f_{T_{1}, T_{2}, \ldots, T_{m} \mid A(t) = m}(s_{1}, s_{2}, \ldots, s_{m}) = \left\{
                                                                 \begin{array}{ll}
                                                                   m!\prod_{\ell = 1}^{m}\frac{\lambda(s_{\ell})}{\Lambda(t)}, & \hbox{$0 < s_{1} < s_{2} < \ldots < s_{m} < t$;} \\
                                                                   0, & \hbox{otherwise.}
                                                                 \end{array}
                                                               \right. \end{align*}
        Then for each $m \geq 1$,
\begin{align} \label{JointLSTofQWDStep2} &~~ \mathbb{E}[e^{-\alpha W(t) - \beta Q(t) - \gamma D(t)} \mid A(t) = m]
\\ &= \int_{0}^{t}\int_{s_{1}}^{t}\cdots \int_{s_{m-1}}^{t}\mathcal{E}_{s_{1}, \ldots, s_{m}}[e^{-\sum_{\ell = 1}^{m}\sum_{j=1}^{B_{s_{\ell}}}[\alpha (S_{j}(s_{\ell}) - (t - s_{\ell}))^{+} + \beta\mathbf{1}(S_{j}(s_{\ell}) > t- s_{\ell}) + \gamma\mathbf{1}(S_{j}(s_{\ell}) \leq t - s_{\ell})]}]m!\prod_{\ell = 1}^{m}\frac{\lambda(s_{\ell})}{\Lambda(t)}\mathrm{d}s_{m}\ldots \mathrm{d}s_{2}\mathrm{d}s_{1} \nonumber \end{align} where $\mathcal{E}_{s_{1}, \ldots, s_{m}}$ represents conditional expectation, given batches arrive at times $s_{1}, s_{2}, \ldots, s_{m}$.  Furthermore, since batches are independent,
\begin{eqnarray} \label{JointLSTofQWDStep3}
\lefteqn{\mathcal{E}_{s_{1}, \ldots, s_{m}}[e^{-\sum_{\ell = 1}^{m}\sum_{j=1}^{B_{s_{\ell}}}[\alpha (S_{j}(s_{\ell}) - (t - s_{\ell}))^{+} + \beta\mathbf{1}(S_{j}(s_{\ell}) > t- s_{\ell}) + \gamma\mathbf{1}(S_{j}(s_{\ell}) \leq t - s_{\ell})]}] }  && \nonumber \\
&=& \prod_{\ell = 1}^{m}\mathcal{E}_{s_{\ell}}[e^{-[\alpha \sum_{j=1}^{B_{s_{\ell}}}(S_{j}(s_{\ell}) - (t - s_{\ell}))^{+} + \beta\mathbf{1}(S_{j}(s_{\ell}) > t - s_{\ell}) + \gamma\mathbf{1}(S_{j}(s_{\ell}) \leq t - s_{\ell})]}]
,
\end{eqnarray}
and this proves that the integrand of the multiple integral found in (\ref{JointLSTofQWDStep2}) is a symmetric function on $[0,t]^{m}$.  Hence, (\ref{JointLSTofQWDStep3}) simplifies to
\begin{align} \label{JointLSTofQWDStep4} \mathbb{E}[e^{-\alpha W(t) - \beta Q(t) - \gamma D(t)} \mid A(t) = m] = \frac{1}{\Lambda(t)^{m}}\left(\int_{0}^{t}\mathcal{E}_{s}[e^{-\sum_{j=1}^{B_{s}}[\alpha (S_{j}(s) - (t - s))^{+} + \beta\mathbf{1}(S_{j}(s) > t - s) + \gamma\mathbf{1}(S_{j}(s) \leq t-s)]}]\lambda(s)\mathrm{d}s\right)^{m} \end{align} and after plugging (\ref{JointLSTofQWDStep4}) into (\ref{JointLSTofQWDStep1}) and simplifying, we get
\begin{align*} \mathbb{E}[e^{-\alpha W(t) - \beta Q(t) - \gamma D(t)}] = \exp\left(-\int_{0}^{t}\mathcal{E}_{s}\left[1 - e^{-\sum_{j=1}^{B_{s}}[\beta\mathbf{1}(S_{j}(s) > t-s) + \gamma\mathbf{1}(S_{j}(s) \leq t-s) + \alpha (S_{j}(s) - (t-s))^{+}]}\right]\lambda(s)\mathrm{d}s\right) \end{align*} proving Theorem \ref{JointLSTofQWD}. \hfill\Halmos \endproof

In light of Theorem \ref{JointLSTofQWD}, it is not difficult to see that the joint LST of $W(t)$, $Q(t)$, and $D(t)$ simplifies significantly under the additional assumption that within a batch arriving at time $s$, the amounts of work are all i.i.d. with CDF $F_{s}$.

\begin{corollary} \label{Cor1QWD}  Suppose that when a batch arrives at time $s$, each customer within that batch brings a generally distributed amount of work with CDF $F_{s}$, independently of everyone else.  Next, for each $s,t \geq 0$, let $X_{s}$ be a random variable whose CDF is $F_{s}$, and define the Laplace-Stieltjes transform
\begin{align*} \phi_{s,t}(\alpha) := \mathbb{E}[e^{-\alpha (X_{s} - t)} \mid X_{s} > t] \end{align*}  Then for $\alpha \geq 0$, $\beta \geq 0$, and $\gamma \geq 0$,
\begin{align} \label{Cor1QWDEq} \mathbb{E}[e^{-\alpha W(t) - \beta Q(t) - \gamma D(t)}] = \exp\left(-\int_{0}^{t}\mathcal{E}_{s}\left[1 - \left[F_{s}(t-s)e^{-\gamma} + \phi_{s,t-s}(\alpha)e^{-\beta}\overline{F}_{s}(t-s)\right]^{B_{s}}\right]\lambda(s)\mathrm{d}s\right). \end{align}
\end{corollary}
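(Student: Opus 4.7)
The plan is to start from the expression for the joint LST given in Theorem \ref{JointLSTofQWD} and exploit the i.i.d.\ assumption on the service times within each batch to factor the inner conditional expectation. The key observation is that, conditional on $B_s = b$ and on a batch arriving at time $s$, the work amounts $S_1(s), \ldots, S_b(s)$ are i.i.d.\ copies of $X_s \sim F_s$, so that
\[ \mathcal{E}_s\!\left[e^{-\sum_{j=1}^{B_s}\psi(S_j(s))} \,\middle|\, B_s\right] = \left(\mathbb{E}\left[e^{-\psi(X_s)}\right]\right)^{B_s},\]
where $\psi(u) := \beta\mathbf{1}(u > t-s) + \gamma\mathbf{1}(u \leq t-s) + \alpha(u - (t-s))^{+}$.

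Next, I would evaluate $\mathbb{E}[e^{-\psi(X_s)}]$ by splitting on the disjoint events $\{X_s \leq t-s\}$ and $\{X_s > t-s\}$. On the former, $\psi(X_s) = \gamma$ (the indicator for survival vanishes, and the positive-part term is zero), so its contribution is $e^{-\gamma}F_s(t-s)$. On the latter, $\psi(X_s) = \beta + \alpha(X_s - (t-s))$, so its contribution becomes
\[ e^{-\beta}\,\mathbb{E}\!\left[e^{-\alpha(X_s - (t-s))}\mathbf{1}(X_s > t-s)\right] = e^{-\beta}\overline{F}_s(t-s)\,\phi_{s,t-s}(\alpha),\]
after factoring out $\overline{F}_s(t-s) = \mathbb{P}(X_s > t-s)$ and using the definition of $\phi_{s,t-s}$. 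Summing the two contributions gives $F_s(t-s)e^{-\gamma} + \phi_{s,t-s}(\alpha)e^{-\beta}\overline{F}_s(t-s)$.

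Raising this to the $B_s$-th power and taking the outer expectation $\mathcal{E}_s$ over the batch size yields the bracketed term in \eqref{Cor1QWDEq}. Substituting into the exponent in \eqref{JointLSTofQWDEq} then immediately produces the claimed identity.

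This corollary is essentially a bookkeeping exercise once Theorem \ref{JointLSTofQWD} is in hand; the only mild subtlety is correctly handling the positive-part term $(S_j(s) - (t-s))^{+}$ so that the contributions on $\{X_s \le t-s\}$ and $\{X_s > t-s\}$ are cleanly separated, and then recognizing the conditional Laplace transform $\phi_{s,t-s}(\alpha)$ in the surviving piece. No interchange of limits or convergence issues arise, since the sum in the exponent is finite on the event $\{B_s < \infty\}$ and all integrands are bounded by $1$.
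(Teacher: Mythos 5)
Your proof is correct and matches the paper's intent exactly: the paper states this corollary as an immediate consequence of Theorem \ref{JointLSTofQWD}, and your conditioning on $B_{s}$, factoring the i.i.d.\ terms, and splitting on $\{X_{s}\leq t-s\}$ versus $\{X_{s}>t-s\}$ is precisely the routine computation being left to the reader. No gaps; the only implicit hypothesis you rely on (independence of the work amounts from the batch size $B_{s}$) is the same one the paper assumes in this i.i.d.\ setting.
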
  Even though each LST $\phi_{s,t}(\alpha)$ typically does not simplify much further, it is noteworthy to realize that $\phi_{s,t}(\alpha)$ can be expressed reasonably well for the special case where $F_{s}$ is the CDF of a phase-type random variable.  In particular, when $F_{s}$ is the CDF of an exponentially distributed random variable with rate $\mu_{s}$, we get
\begin{align*} \phi_{s,t}(\alpha) = \frac{\mu_{s}}{\mu_{s} + \alpha}. \end{align*}  In the next corollary, we use this simple fact to show that the joint LST of $Q(t)$ and $W(t)$ simplifies considerably when all amounts of work are exponentially distributed.

\begin{corollary} \label{Cor2QWD} Suppose that when a batch arrives at time $s$, each customer within that batch brings an independently and exponentially distributed amount of work with rate $\mu_{s}$.  Then for $\alpha \geq 0$, $\beta \geq 0$, and $\gamma \geq 0$,
\begin{align*} \mathbb{E}[e^{-\alpha W(t) - \beta Q(t) - \gamma D(t)}] = \exp\left(-\int_{0}^{t}\mathcal{E}_{s}\left[1 - \left[(1 - e^{-\mu_{s}(t-s)})e^{-\gamma} + \frac{\mu_{s}}{\mu_{s} + \alpha}e^{-\beta}e^{-\mu_{s}(t-s)}\right]^{B_{s}}\right]\lambda(s)\mathrm{d}s\right). \end{align*}  \end{corollary}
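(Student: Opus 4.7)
The plan is to obtain Corollary \ref{Cor2QWD} as an immediate specialization of Corollary \ref{Cor1QWD} by plugging in the exponential-service ingredients. The hypothesis of Corollary \ref{Cor1QWD}---that each customer within a batch arriving at time $s$ brings an i.i.d.\ amount of work with CDF $F_s$---is precisely the setting assumed here, with $F_s$ being the $\mathrm{Exp}(\mu_s)$ distribution, so formula (\ref{Cor1QWDEq}) applies without further work.

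First, I would evaluate the two CDF quantities appearing in the bracketed expression: $F_s(t-s) = 1 - e^{-\mu_s(t-s)}$ and $\overline{F}_s(t-s) = e^{-\mu_s(t-s)}$. The remaining ingredient is the conditional Laplace-Stieltjes transform $\phi_{s,t-s}(\alpha) = \mathbb{E}[e^{-\alpha(X_s - (t-s))} \mid X_s > t-s]$, which is where the only nontrivial observation enters. Invoking the memoryless property of the exponential distribution, the conditional residual $X_s - (t-s)$ given $\{X_s > t-s\}$ is again exponentially distributed with rate $\mu_s$, so $\phi_{s,t-s}(\alpha) = \mu_s/(\mu_s + \alpha)$; this is exactly the identity already flagged in the paragraph preceding the corollary.

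Substituting these three closed-form quantities into (\ref{Cor1QWDEq}), the bracket $F_s(t-s)e^{-\gamma} + \phi_{s,t-s}(\alpha)e^{-\beta}\overline{F}_s(t-s)$ becomes
\[
(1 - e^{-\mu_s(t-s)})e^{-\gamma} + \frac{\mu_s}{\mu_s + \alpha}e^{-\beta}e^{-\mu_s(t-s)},
\]
which, raised to the $B_s$ and inserted into the outer exponential, is precisely the expression in the statement. There is essentially no obstacle: the proof reduces to routine substitution once memorylessness is used to evaluate $\phi_{s,t-s}(\alpha)$, and that step has already been carried out in the text, so the argument is entirely bookkeeping.
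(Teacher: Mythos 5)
Your proposal is correct and matches the paper's (implicit) argument exactly: the paper derives Corollary \ref{Cor2QWD} by substituting $F_s(t-s)=1-e^{-\mu_s(t-s)}$, $\overline{F}_s(t-s)=e^{-\mu_s(t-s)}$, and, via memorylessness, $\phi_{s,t-s}(\alpha)=\mu_s/(\mu_s+\alpha)$ into formula (\ref{Cor1QWDEq}) of Corollary \ref{Cor1QWD}. Nothing further is needed.
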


\subsection{Connecting to the Workload under Individual Arrivals}


As a brief aside, let us take a moment to contrast the transforms we have just found with prior results on the workload process of time-varying infinite-server queueing systems. The literature on the infinite server workload process under batch arrivals appears to be scarce; the same seems true even when each batch is of size one (individual arrivals).  The most relevant reference we found that even remotely addresses the workload process of time-varying infinite-server queues with Poisson arrivals is \citet{goldberg2008last}, which is primarily concerned with the study of the last departure time from a $M_{t}/G/\infty$ queueing system, when the arrival process stops at some fixed, deterministic time $\tau$.  While \citet{goldberg2008last} do not study the workload process in itself, Theorem 2.1 of \citet{goldberg2008last} can be used to derive the LST of the workload process of the $M_{t}/G/\infty$ queue, as this result provides the conditional joint distribution, given $Q(t) = n$, of the remaining service times of the $n$ customers present in the system at time $t$. The next proposition is a slight generalization of Theorem 2.1 of \citet{goldberg2008last}, in that it applies to the $M_{t}/G_{t}/\infty$ system, and it can be proven in precisely the same manner, which involves conditioning on the order statistics associated with the thinned Poisson process associated with customers that are still present in the system at time $t$, then simplifying.  We omit the details.

\begin{proposition} \label{residualservices} Conditional on $Q(t) = n$, the remaining service times of the customers present at time $t$ are iid, with CDF $H_{t}: \mathbb{R} \rightarrow [0,1]$ having tail
\begin{align*} \overline{H}_{t}(x) = \frac{1}{\nu_{t}}\int_{0}^{t}\overline{F}_{s}(t + x - s)\lambda(s)\mathrm{d}s \end{align*} where
\begin{align*} \nu_{t} := \int_{0}^{t}\overline{F}_{s}(t - s)\lambda(s)\mathrm{d}s. \end{align*} \end{proposition}  Once Proposition \ref{residualservices} is known, it can be used to calculate the LST of $W(t)$ for the $M_{t}/G_{t}/\infty$ queue.  Again, we omit the proof as it follows from conditioning on $Q(t)$, then applying Proposition \ref{residualservices}.

\begin{proposition} The LST of $W(t)$ is as follows: for each $\alpha \geq 0$,
\begin{align*} \mathbb{E}[e^{-\alpha W(t)}] = e^{-(1 - \phi_{t}(\alpha))\int_{0}^{t}\lambda(s)\overline{F}_{s}(t-s)\mathrm{d}s} \end{align*} where $\phi_{t}$ is the LST associated with the CDF $H_{t}$. \end{proposition}

As an interesting aside, it is worth noting that Theorem 2.2 of \citet{goldberg2008last} (namely, Identity (2.5)) can be derived with our thinning approach from Section~\ref{thinSec}, without applying Proposition \ref{residualservices}.

\begin{proposition} \label{GoldbergWhittDeparture} Let $D$ denote the last departure time of a $M_{t}/G_{t}/\infty$ queue when arrivals are turned off at time $t$, and let $T_{t} := (D - t)^{+}$ denote the remaining amount of time after $t$ until the last departure.  Then for each $x \geq 0$,
\begin{align*} \mathbb{P}(T_{t} \leq x) = e^{-\nu_{t}\overline{H}_{t}(x)}. \end{align*} \end{proposition}
\proof{Proof.} Fix $x \geq 0$, and let $Y^{(t+x)}(0,t]$
denote the number of jobs that arrive in the interval $(0,t]$ that are still present in the system at time $t + x$.  This random variable is a Poisson random variable with mean
\begin{align*} \mathbb{E}[Y^{(t+x)}(0,t]] = \int_{0}^{t}\overline{F}_{s}(t + x - s)\lambda(s)\mathrm{d}s \end{align*} which implies
\begin{align*} \mathbb{P}(T_{t} \leq x) = \mathbb{P}(Y^{(t+x)}(0,t] = 0) = e^{-\int_{0}^{t}\overline{F}_{s}(t + x - s)\lambda(s)\mathrm{d}s} = e^{-\nu_{t}\overline{H}_{t}(x)} \end{align*} proving the claim. \hfill\Halmos \endproof

Finer understanding of the infinite server workload process will be of use when we return to studying the process in the batch scaling limits analyzed in Section~\ref{limitSec}.

\subsection{Finite-Dimensional Joint Transform and Auto-Covariances}




In the preceding techniques, conditional uniformity allowed us to access all batch arrival times, which made the workload process straightforward to analyze. By comparison, the decomposition in Theorems~\ref{SACTQandD} and~\ref{SACTQandD-FD} does not provide such access. However, Theorem~\ref{SACTQandD-FD} does provide simple access to the laws of the queue and the departure processes at a collection of points in time --- rather than at only single epoch as in Theorem~\ref{JointLSTofQWD} --- and this enables us to easily study the stochastic processes from finite-dimensional perspectives. Our next result uses the decomposition to provide an expression for the joint LST of the finite-dimensional distributions of both $\{Q(t); t\geq 0\}$ and $\{D(t); t \geq 0\}$, as well as the auto-covariance functions of both $\{Q(t); t \geq 0\}$ and $\{D(t); t \geq 0\}$.

\begin{theorem} \label{MainLSTresult}
The joint Laplace-Stieltjes transform of the random vector
\begin{align*} (Q(t_{1}), Q(t_{2}), \ldots, Q(t_{m}), D(t_{1}), D(t_{2}), \ldots, D(t_{m})) \end{align*} is as follows: for $\boldsymbol{\alpha} := (\alpha_{1}, \alpha_{2}, \ldots, \alpha_{m}) \in \mathbb{R}_{+}^{m}$, $\boldsymbol{\beta} := (\beta_{1}, \beta_{2}, \ldots, \beta_{m}) \in \mathbb{R}_{+}^{m}$,
we have
\begin{eqnarray} \label{MainLSTresult1}
& & \mathbb{E}[e^{-\sum_{k=1}^{m}(\alpha_{k}Q(t_{k}) + \beta_{k}D(t_{k}))}] \\
&=& \prod_{k=1}^{m}\left[\prod_{n=1}^{\infty}\prod_{j_{k} = 1}^{n+1}\prod_{j_{k+1} = j_{k}}^{n+1} \cdots \prod_{j_{m} = j_{m-1}}^{n+1}\mathbb{E}\left[e^{-(\sum_{\ell = k}^{m}(\alpha_{\ell}(n - j_{\ell} + 1) + \beta_{\ell}(j_{\ell} - 1)))Y_{j_{k}, j_{k+1}, \ldots, j_{m}; n}(t_{k-1}, t_{k}]}\right]\right]. \nonumber
\end{eqnarray} where
\begin{align} \label{MainLSTresult11} &~~ \mathbb{E}\left[e^{-(\sum_{\ell = k}^{m}(\alpha_{\ell}(n - j_{\ell} + 1) + \beta_{\ell}(j_{\ell} - 1)))Y_{j_{k}, j_{k+1}, \ldots, j_{m}, n}(t_{k-1}, t_{k}]}\right] \\ &= e^{-(1 - e^{-\sum_{\ell = k}^{m}(\alpha_{\ell}(n - j_{\ell} + 1) + \beta_{\ell}(j_{\ell} - 1))})\int_{t_{k-1}}^{t_{k}}\mathcal{P}_{s}\left(\{B_{s} = n\} \cap \bigcap_{\ell = k}^{m}\{S_{j_{\ell} - 1:n}(s) \leq t_{\ell} - s, S_{j_{\ell}:n}(s) > t_{\ell} - s\}\right)\lambda(s)\mathrm{d}s}. \nonumber \end{align}  Furthermore, the auto-covariance functions of $\{Q(t); t \geq 0\}$ and $\{D(t); t \geq 0\}$ are as follows:  for each $t_{1}, t_{2}$ satisfying $0 < t_{1} < t_{2}$,
\begin{eqnarray} \label{MainLSTresult2}
\mathrm{Cov}(Q(t_{1}), Q(t_{2})) &=& \sum_{n=1}^{\infty}\sum_{j_{1} = 1}^{n+1}\sum_{j_{2} = j_{1}}^{n+1}(n - j_{1} + 1)(n - j_{2} + 1) \\
& & \times \int_{0}^{t}\mathcal{P}_{s}\left(\{B_{s} = n\} \cap \bigcap_{\ell = 1}^{2}\{S_{j_{\ell} - 1:n}(s) \leq t_{\ell} - s, S_{j_{\ell}:n}(s) > t_{\ell} - s\}\right)\lambda(s)\mathrm{d}s. \nonumber
\end{eqnarray}
and
\begin{eqnarray} \label{MainLSTresult3} \mathrm{Cov}(D(t_{1}), D(t_{2})) &=& \sum_{n=1}^{\infty}\sum_{j_{1} = 1}^{n+1}\sum_{j_{2} = j_{1}}^{n+1}(j_{1} - 1)( j_{2} - 1) \\ & & \times \int_{0}^{t}\mathcal{P}_{s}\left(\{B_{s} = n\} \cap \bigcap_{\ell = 1}^{2}\{S_{j_{\ell} - 1:n}(s) \leq t_{\ell} - s, S_{j_{\ell}:n}(s) > t_{\ell} - s\}\right)\lambda(s)\mathrm{d}s. \nonumber \end{eqnarray}
 \end{theorem}
\proof{Proof.} We begin by deriving both (\ref{MainLSTresult2}) and (\ref{MainLSTresult3}).  Considering first the random vector $(Q(t_{1}), Q(t_{2}))$, from Theorem~\ref{SACTQandD-FD} we see that for $(\alpha_{1}, \alpha_{2}) \in \mathbb{R}_{+}^{2}$,
\begin{eqnarray*}
& & \alpha_{1}Q(t_{1}) + \alpha_{2}Q(t_{2}) \\ &=& \alpha_{1}\sum_{n=1}^{\infty}\sum_{j_{1} = 1}^{n+1}\sum_{j_{2} = j_{1}}^{n+1}(n - j_{1} + 1)Y_{j_{1}, j_{2}; n}(0,t_{1}] \\ &+& \alpha_{2}\left[\sum_{n=1}^{\infty}\sum_{j_{1} = 1}^{n+1}\sum_{j_{2} = j_{1}}^{n+1}(n - j_{2} + 1)Y_{j_{1}, j_{2};n}(0,t_{1}] + \sum_{n=1}^{\infty}\sum_{j_{2} = 1}^{n+1}(n - j_{2} + 1)Y_{j_{2};n}(t_{1}, t_{2}]\right] \\ &=& \sum_{n=1}^{\infty}\sum_{j_{1} = 1}^{n+1}\sum_{j_{2} = j_{1}}^{n+1}(\alpha_{1}(n - j_{1} + 1) + \alpha_{2}(n - j_{2} + 1))Y_{j_{1}, j_{2};n}(0,t_{1}] + \sum_{n=1}^{\infty}\sum_{j_{2} = 1}^{n+1}\alpha_{2}(n - j_{2} + 1)Y_{j_{2};n}(t_{1}, t_{2}].
 \end{eqnarray*} Moreover, for $(\beta_{1}, \beta_{2}) \in \mathbb{R}_{+}^{2}$,
\begin{eqnarray*} & & \beta_{1}D(t_{1}) + \beta_{2}D(t_{2}) \\ &=& \beta_{1}\sum_{n=1}^{\infty}\sum_{j_{1} = 1}^{n+1}\sum_{j_{2} = j_{1}}^{n+1}(j_{1} - 1)Y_{j_{1}, j_{2}; n}(0,t_{1}] \\ &+& \beta_{2}\left[\sum_{n=1}^{\infty}\sum_{j_{1} = 1}^{n+1}\sum_{j_{2} = j_{1}}^{n+1}(j_{2} - 1)Y_{j_{1}, j_{2};n}(0,t_{1}] + \sum_{n=1}^{\infty}\sum_{j_{2} = 1}^{n+1}(j_{2} - 1)Y_{j_{2};n}(t_{1}, t_{2}]\right] \\ &=& \sum_{n=1}^{\infty}\sum_{j_{1} = 1}^{n+1}\sum_{j_{2} = j_{1}}^{n+1}(\beta_{1}(j_{1} - 1) + \beta_{2}(j_{2} - 1))Y_{j_{1}, j_{2};n}(0,t_{1}] + \sum_{n=1}^{\infty}\sum_{j_{2} = 1}^{n+1}\beta_{2}(j_{2} - 1)Y_{j_{2};n}(t_{1}, t_{2}].  \end{eqnarray*}
These representations for $Q(t)$, $Q(t+u)$, $D(t)$, and $D(t+u)$ can be used to derive the auto-covariance functions.  Indeed,
\begin{eqnarray*}
& & \mathrm{Cov}(Q(t_{1}), Q(t_{2})) \\ &=& \mathrm{Cov}\left(\sum_{n=1}^{\infty}\sum_{j_{1} = 1}^{n+1}\sum_{j_{2} = j_{1}}^{n+1}(n - j_{1} + 1)Y_{j_{1}, j_{2}; n}(0,t_{1}], \sum_{n=1}^{\infty}\sum_{j_{1} = 1}^{n+1}\sum_{j_{2} = j_{1}}^{n+1}(n - j_{2} + 1)Y_{j_{1}, j_{2};n}(0,t_{1}]\right) \\
&=& \sum_{n_{1}=1}^{\infty}\sum_{j_{1} = 1}^{n_{1}+1}\sum_{j_{2} = j_{1}}^{n_{1}+1}\sum_{n_{2} = 1}^{\infty}\sum_{k_{1} = 1}^{n_{2}+1}\sum_{k_{2} = k_{1}}^{n_{2}+1}(n_{2} - j_{1} + 1)(n_{2} - k_{2} + 1) \mathrm{Cov}(Y_{j_{1}, j_{2}; n_{1}}(0,t_{1}], Y_{k_{1}, k_{2};n_{2}}(0,t_{1}]) \\
&=& \sum_{n=1}^{\infty}\sum_{j_{1} = 1}^{n+1}\sum_{j_{2} = j_{1}}^{n+1}(n - j_{1} + 1)(n - j_{2} + 1) \mathrm{Var}(Y_{j_{1}, j_{2}; n}(0,t_{1}]) \end{eqnarray*}
which proves (\ref{MainLSTresult2}) since
\begin{align*}&~~ \mathrm{Var}(Y_{j_{1}, j_{2}; n}(0,t_{1}]) \\ &= \int_{0}^{t_{1}}\mathcal{P}_{s}\left(\{B_{s} = n\} \cap \bigcap_{\ell = 1}^{2}\{S_{j_{\ell} - 1:n}(s) \leq t_{\ell} - s, S_{j_{\ell}:n}(s) > t_{1} - s\}\right)\lambda(s)\mathrm{d}s \end{align*}  and a similar argument can be used to establish (\ref{MainLSTresult3}).

It remains to prove (\ref{MainLSTresult1}).  Given any collection of real numbers $0 = t_{0} < t_{1} < t_{2} < t_{3} < \cdots < t_{m-1} < t_{m}$, we have that for $(\alpha_{1}, \alpha_{2}, \ldots, \alpha_{m-1}, \alpha_{m}) \in \mathbb{R}_{+}^{m}$, $(\beta_{1}, \beta_{2}, \ldots, \beta_{m}) \in \mathbb{R}_{+}^{m}$,
\begin{eqnarray*}
& & \sum_{i=1}^{m}\alpha_{i}Q(t_{i}) + \sum_{i=1}^{m}\beta_{i}D(t_{i}) \\ &=& \sum_{k=1}^{m}\left[\sum_{n=1}^{\infty}\sum_{j_{k} = 1}^{n+1}\sum_{j_{k+1} = j_{k}}^{n+1} \cdots \sum_{j_{m} = j_{m-1}}^{n+1}\left[\sum_{\ell = k}^{m}(\alpha_{\ell}(n - j_{\ell} + 1) + \beta_{\ell}(j_{\ell} - 1))\right]Y_{j_{k}, j_{k + 1}, \ldots, j_{m}; n}(t_{k-1}, t_{k}]\right].
 \end{eqnarray*}
The representation from Theorem~\ref{SACTQandD-FD} shows that $\sum_{k=1}^{m}(\alpha_{k}Q(t_{k}) + \beta_{k}Q(t_{k}))$ can be expressed as a finite sum of independent, scaled Poisson random variables.  Further exploitation of this observation gives
\begin{eqnarray*} & & \mathbb{E}[e^{-\sum_{i=1}^{m}(\alpha_{i}Q(t_{i}) + \beta_{i}D(t_{i}))}] \\ &=& \prod_{k=1}^{m}\left[\prod_{n=1}^{\infty}\prod_{j_{k} = 1}^{n+1}\prod_{j_{k+1} = j_{k}}^{n+1} \cdots \prod_{j_{m} = j_{m-1}}^{n+1}\mathbb{E}\left[e^{-(\sum_{\ell = k}^{m}(\alpha_{\ell}(n - j_{\ell} + 1) + \beta_{\ell}(j_{\ell} - 1)))Y_{j_{k}, j_{k+1}, \ldots, j_{m}, n}(t_{k-1}, t_{k}]}\right]\right] \end{eqnarray*} which establishes (\ref{MainLSTresult1}), as clearly
\begin{align*} &~~ \mathbb{E}\left[e^{-(\sum_{\ell = k}^{m}(\alpha_{\ell}(n - j_{\ell} + 1) + \beta_{\ell}(j_{\ell} - 1)))Y_{j_{k}, j_{k+1}, \ldots, j_{m}, n}(t_{k-1}, t_{k}]}\right] \\ &= e^{-(1 - e^{-\sum_{\ell = k}^{m}(\alpha_{\ell}(n - j_{\ell} + 1) + \beta_{\ell}(j_{\ell} - 1))})\int_{t_{k-1}}^{t_{k}}\mathcal{P}_{s}\left(\{B_{s} = n\} \cap \bigcap_{\ell = k}^{m}\{S_{j_{\ell} - 1:n}(s) \leq t_{\ell} - s, S_{j_{\ell}:n}(s) > t_{\ell} - s\}\right)\lambda(s)\mathrm{d}s} \end{align*} due to $Y_{j_{k}, j_{k+1}, \ldots, j_{m}, n}(t_{k-1}, t_{k}]$ being a Poisson random variable.  This completes the proof of Theorem \ref{MainLSTresult}.  \hfill\Halmos \endproof

While the minimal assumptions on the arrival rates of batches and the services needed to achieve Theorem \ref{MainLSTresult} provide a broad generality for both the statement and its proof techniques, we recognize that it may occasionally be desirable to sacrifice the generality of our setting to achieve very simple results. If we consider specific settings, we are indeed able to derive simplified expressions. For example, in the case of stationary exponential service we can cleanly relate the auto-covariance and the variance.

\begin{proposition}
If the service is exponentially distributed at rate $\mu > 0$, the auto-covariance of the queue length is such that
\begin{align}
\Cov{Q(t),Q(t+\delta)} = \Var{Q(t)} e^{-\mu \delta}
,
\end{align}
for $t, \delta \geq 0$.
\end{proposition}
\proof{Proof.}
Since the queue length at time $t + \delta$ can be written as the queue length at $t$ plus the number of arrivals in $[t,t+\delta)$ and less the number of departures in $[t, t+\delta)$, i.e.
\begin{align*}
Q(t + \delta)
=
Q(t)
+
\sum_{i=1}^{A(t+\delta)} B_i - \sum_{i=1}^{A(t)} B_i
-
D(t+\delta) + D(t)
,
\end{align*}
we can decompose the auto-covariance accordingly. That is, by the definition of covariance we have that
\begin{align*}
\Cov{Q(t),Q(t+\delta)}
&=
\E{Q(t)Q(t+\delta)}
-
\E{Q(t)}\E{Q(t+\delta)}
\\
&=
\E{Q(t)\left(
Q(t)
+
\sum_{i=1}^{A(t+\delta)} B_i - \sum_{i=1}^{A(t)} B_i
-
D(t+\delta) + D(t)
\right)}
\\
&
\quad
-
\E{Q(t)}
\left(
\E{Q(t) }
+
\E{\sum_{i=1}^{A(t+\delta)} B_i - \sum_{i=1}^{A(t)} B_i }
-
\E{D(t+\delta) - D(t)}
\right)
.
\end{align*}
Since both the future of the arrival process and the sequence of batch sizes are independent from the history of queue, these terms cancel one another. With the linearity of expectation, this then simplifies to
 \begin{align*}
\Cov{Q(t),Q(t+\delta)}
&=
\Var{Q(t)}
-
\E{Q(t)(D(t+\delta) - D(t)}
+
\E{Q(t)}
\E{D(t+\delta) - D(t)}
.
\end{align*}
Given the queue length at time $t$, the number of departures on the interval $[t, t+\delta)$ can be written as a sum over all services that were completed. Using the memoryless-ness of exponential service, this means that
$$
D(t+\delta) - D(t)
=
\sum_{j=1}^{Q(t)}
\mathbf{1}\{S_j < \delta\}
,
$$
where $S_j \sim \mathrm{Exp}(\mu)$ are mutually independent and also independent of $Q(t)$. Through conditional expectation, we can also observe that
$$
\E{ Q(t) \sum_{j=1}^{Q(t)}
\mathbf{1}\{S_j < \delta\}}
=
\E{ Q(t) \sum_{j=1}^{Q(t)} \E{
\mathbf{1}\{S_j < \delta\} \mid Q(t) }}
=
\E{Q(t)^2} \PP{S_1 < \delta}
.
$$
Using this observation and the analogous result for the mean number of departures, we can further simplify the auto-covariance to
 \begin{align*}
\Cov{Q(t),Q(t+\delta)}
&=
\Var{Q(t)}
-
\E{Q(t)^2} \PP{S_1 < \delta}
+
\E{Q(t)}^2 \PP{S_1 < \delta}
\\
&=
\Var{Q(t)}
-
\Var{Q(t)}(1 - e^{-\mu \delta})
\\
&
=
\Var{Q(t)}e^{-\mu \delta}
,
\end{align*}
which completes the proof.
\hfill\Halmos \endproof

Although it is commonplace and thus expected for the exponential distribution to yield great simplicity, we can also find reduced expressions while maintaining some broader generality. In our next result, we find that under the assumptions where, for a batch that arrives at time $s$, all services within that batch are i.i.d. with CDF $F_{s}$, as well as independent of the batch size $B_{s}$, the finite-dimensional distributions of both $\{Q(t); t \geq 0\}$ and $\{D(t); t \geq 0\}$ simplify considerably from Theorem~\ref{MainLSTresult}.

\begin{theorem} \label{MainLSTiid} Suppose that within each batch arriving at time $s$, the amounts of work within that batch are i.i.d. with cumulative distribution function $F_{s}$.  Then the joint Laplace-Stieltjes transform of the random vector
\begin{align*} (Q(t_{1}), Q(t_{2}), \ldots, Q(t_{m}), D(t_{1}), D(t_{2}), \ldots, D(t_{m})) \end{align*} is as follows: for $\boldsymbol{\alpha} := (\alpha_{1}, \alpha_{2}, \ldots, \alpha_{m}) \in \mathbb{R}_{+}^{m}$, $\boldsymbol{\beta} := (\beta_{1}, \beta_{2}, \ldots, \beta_{m}) \in \mathbb{R}_{+}^{m}$, we have
\begin{eqnarray*} \mathbb{E}[e^{-\sum_{k=1}^{m}(\alpha_{k}Q(t_{k}) + \beta_{k}D(t_{k}))}] = e^{-\sum_{k=1}^{m}\sum_{b=1}^{\infty}\int_{t_{k-1}}^{t_{k}}\gamma_{k,b,\boldsymbol{\alpha}, \boldsymbol{\beta}}^{(m)}(s)\mathcal{P}_{s}(B_{s} = b)\lambda(s)\mathrm{d}s}.
\end{eqnarray*}
where for each $k \in \{1,2,\ldots,m\}$, each integer $b \geq 1$, and each $s \in [t_{k-1}, t_{k})$, we have
\begin{small}
\begin{align*} \gamma_{k,b,\boldsymbol{\alpha}, \boldsymbol{\beta}}^{(m)}(s) := 1 - e^{-b\sum_{x=k}^{m}\beta_{x}}\left[1 - \sum_{\ell = k+1}^{m}(1 - e^{-\sum_{x = k}^{\ell - 1}(\alpha_{x} - \beta_{x})})F_{s}(t_{\ell - 1} - s, t_{\ell} - s] - (1 - e^{-\sum_{x = k}^{m}(\alpha_{x} - \beta_{x})})\overline{F}_{s}(t_{m} - s)\right]^{b}. \end{align*} \end{small}
\end{theorem}

\proof{Proof.} Our objective now is to simplify the Laplace-Stieltjes transform found in (\ref{MainLSTresult11}).  Using standard properties of order statistics associated with i.i.d. random variables, we find that for each $s \in (t_{k-1}, t_{k}]$,
\begin{eqnarray} \label{MainLSTiidEq1}
& & \mathcal{P}_{s}\left(\bigcap_{\ell = k}^{m}\{S_{j_{\ell} - 1:n}(s) \leq t_{\ell} - s, S_{j_{\ell}:n}(s) > t_{\ell} - s\}\right) \\ &=& \frac{n!}{(j_{k} - 1)!(j_{k + 1} - j_{k})! \cdots (j_{m} - j_{m-1})!(n - j_{m} + 1)!} \nonumber
\\ &\times& F_{s}(t_{k}-s)^{j_{k} - 1}F_{s}(t_{k} - s, t_{k+1} - s]^{j_{k + 1} - j_{k}} \cdots F_{s}(t_{m-1} - s, t_{m} - s]^{j_{m} - j_{m-1}}\overline{F}_{s}(t_{m} - s)^{n - j_{m} + 1} \nonumber
\end{eqnarray}
where $F_{s}(s,t] := \mathcal{P}_{s}(s < S_{1}(s) \leq t)$, and $\overline{F}_{s}(t) = \mathcal{P}_{s}(S_{1}(s) > t)$.

Plugging this probability into (\ref{MainLSTresult1}), after plugging (\ref{MainLSTresult11}) into (\ref{MainLSTresult1}), and combining all exponential terms yields, within the exponent, the summation
\begin{eqnarray*}
\sum_{j_{k} = 1}^{n+1}\sum_{j_{k+1} = j_{k}}^{n+1}\cdots \sum_{j_{m} = j_{m-1}}^{n+1} & & (1 - e^{-\sum_{\ell = k}^{m}(\alpha_{\ell}(n - j_{\ell} + 1) + \beta_{\ell}(j_{\ell} - 1))})\frac{n!}{(j_{k} - 1)!(j_{k + 1} - j_{k})! \cdots (j_{m} - j_{m-1})!(n - j_{m} + 1)!}
\\ &\times& F_{s}(0, t_{k}-s]^{j_{k} - 1}F_{s}(t_{k} - s, t_{k+1} - s]^{j_{k + 1} - j_{k}} \cdots F_{s}(t_{m-1} - s, t_{m} - s]^{j_{m} - j_{m-1}} \\ &\times& \overline{F}_{s}(t_{m} - s)^{n - j_{m} + 1}
\end{eqnarray*} but this sum is simply
\begin{align*} &~~~ 1 - e^{-n\sum_{x = k}^{m}\beta_{x}}\left[F_{s}(0, t_{k} - s] + \sum_{\ell = k+1}^{m}e^{-\sum_{x = k}^{\ell - 1}(\alpha_{x} - \beta_{x})}F_{s}(t_{\ell - 1} - s, t_{\ell} - s] + e^{-\sum_{x = k}^{m}(\alpha_{x} - \beta_{x})}\overline{F}_{s}(t_{m} - s)\right]^{n} \\ &= 1 - e^{-n\sum_{x=k}^{m}\beta_{x}}\left[1 - \sum_{\ell = k+1}^{m}(1 - e^{-\sum_{x = k}^{\ell - 1}(\alpha_{x} - \beta_{x})})F_{s}(t_{\ell - 1} - s, t_{\ell} - s] - (1 - e^{-\sum_{x = k}^{m}(\alpha_{x} - \beta_{x})})\overline{F}_{s}(t_{m} - s)\right]^{n} \\ &= \gamma_{k,n,\boldsymbol{\alpha}, \boldsymbol{\beta}}^{(m)}(s). \end{align*}
Hence,
\begin{eqnarray*} \mathbb{E}[e^{-\sum_{k=1}^{m}(\alpha_{k}Q(t_{k}) + \beta_{k}D(t_{k}))}] = e^{-\sum_{k=1}^{m}\sum_{b=1}^{\infty}\int_{t_{k-1}}^{t_{k}}\gamma_{k,b,\boldsymbol{\alpha}, \boldsymbol{\beta}}^{(m)}(s)\mathcal{P}_{s}(B_{s} = b)\lambda(s)\mathrm{d}s} \end{eqnarray*} which proves the claim. \hfill\Halmos \endproof

\section{Almost Sure Convergence of Batch Scaling Limits}\label{limitSec}

Let us now take a deeper look at the scaling-limit theorems derived previously in the literature. Having studied the transforms of $Q(t)$, $D(t)$, and $W(t)$, we will turn to considering batch scaling limits of the system, in which the batch sizes grow large and the stochastic processes are normalized accordingly. Let us note that weak convergence of these limits will follow directly from the LST's provided in Theorems~\ref{JointLSTofQWD} (single dimensional) and~\ref{MainLSTresult} (finite-dimensional) {without invoking any results from the theory of Markov processes, as is done in Section 4 of} \citet{de2017shot}. Hence, here we focus instead on establishing strong convergence, which we believe to be the first almost sure convergence result in the batch scaling setting. We will restrict our attention to a scaling of a single dimensional object, but finite-dimensional results can be readily obtained using these same techniques.

Consider now a sequence of infinite-server queueing systems indexed by $m$, for each integer $m \geq 1$. Let the $m^\text{th}$ queueing system be an infinite-server queue with batch arrivals, whose queue-length process $\{Q(t); t \geq 0\}$ is of the form
\begin{align} \label{batch_queue}
Q_{m}(t) := \sum_{n=1}^{A(t)}\sum_{k=1}^{B_{n}^{(m)}}\mathbf{1}(S_{n,k} > t - T_{n})
,
\end{align}
where
\begin{align*}
B_{n}^{(m)} := \lceil m B_{n} \rceil
\end{align*}
represents the number of customers contain in the $n$th batch arrival to the system, and for each $n \geq 1$, the sequence $\{S_{n,k}\}_{k \geq 1}$ is an i.i.d. sequence of random variables, having CDF $F$.  We also associate with the $m^\text{th}$ queueing system the workload process $\{W_{m}(t); t \geq 0\}$, where
\begin{align*}
W_{m}(t) := \sum_{n=1}^{A(t)}\sum_{k=1}^{B_{n}^{(m)}}(S_{n,k} - (t - T_{n}))^{+}
\end{align*}
where $(x)^{+} := \max(x,0)$ for each $x \in \mathbb{R}$.

For now, we further assume that whenever a batch of customers arrive at time $t$, the amounts of work in the batch are i.i.d.~with CDF $F$, and independent of $B_{n}$ (i.e.~they are independent of the batch size itself).  Later, we will explain how this assumption can be relaxed to allow for services to be time-varying, but measurability issues arise if we try to stay in the most general setting discussed previously. We will maintain the Poisson assumptions on the arrival process $A(t)$ for thematic consistency, but readers can observe that $A(t)$ actually can be generalized to any simple point process. The following proof techniques will immediately carry over to such arrival processes.

\citet{de2017shot} established weak convergence of the batch scaled queue and workload to  the shot-noise processes $\{Z(t); t \geq 0\}$ and $\{Z^{w}(t); t \geq 0\}$, respectively, where $\{Z(t); t \geq 0\}$ and $\{Z^{w}(t); t \geq 0\}$ are defined as follows. For each $t \geq 0$,
\begin{align*}
Z(t) = \sum_{n=1}^{A(t)}B_{n}\overline{F}(t - T_{n})
,
\end{align*}
and
\begin{align*}
Z^{w}(t) = \sum_{n=1}^{A(t)}B_{n}\mathbb{E}[S_{1}]\overline{F}^{e}(t - T_{n})
,
\end{align*}
where $F^e$ is the residual or stationary excess distribution associated with $F$. We proceed with proving the strong convergence of these limits in the following two subsections, first for the queue length and then for the workload. We separate these results, as each requires a proof technique that would not suffice for the other.

\subsection{Limit of the Queue Length Process}

For each integer $m \geq 1$, and each real $t \geq 0$, let us define
\begin{align*}
\overline{Q}_{m}(t) := \frac{Q_{m}(t)}{m}.
\end{align*}
Our first result shows that for each $t \geq 0$, $\overline{Q}_{m}(t)$ converges almost surely to $Z(t)$.  One would think that this result should follow obviously from the strong law of large numbers, but after writing out $\overline{Q}_{m}(t)$, we find
\begin{eqnarray} \label{scaled_queue}
\overline{Q}_{m}(t) &=& \frac{Q_{m}(t)}{m} \\
&=& \frac{1}{m} \sum_{n=1}^{A(t)}\sum_{k=1}^{B_{n}^{(m)}}\mathbf{1}(S_{n,k} > t - T_{n}) \\
&=&  \sum_{n=1}^{A(t)}\frac{\lceil m B_{T_{n}} \rceil}{m}\frac{1}{B_{n}^{(m)}}\sum_{k=1}^{B_{n}^{(m)}}\mathbf{1}(T_{n} + S_{n,k} \leq t)
\end{eqnarray}
and the presence of the $T_{n}$ terms within the indicator function keeps us from applying the strong law directly, since $\{T_{n} + S_{n,k}\}_{k \geq 1}$ do not necessarily form an i.i.d.~sequence.  In fact, the indicator random variables $\{T_{n} + S_{n,k}\}_{k \geq 1}$ need not be independent nor identically distributed.  If they were independent and not identically distributed, we could easily use Kolmogorov's strong law of large numbers.  If they were identically distributed and not independent, it is possible to leverage standard correlation decay arguments.  Nevertheless, it is possible to get around this issue if one invokes the Glivenko-Cantelli Theorem, which we will now employ.

\begin{theorem} \label{QueueWP1}
For each $t \geq 0$, we see that as $m \rightarrow \infty$,
\begin{align*}
\overline{Q}_{m}(t) \stackrel{\mathsf{a.s.}}{\longrightarrow} Z(t).
\end{align*}
\end{theorem}

\proof{Proof.}
Fix a real $t \geq 0$. Given an integer $n \geq 1$, we can see from the Glivenko-Cantelli Theorem that
\begin{align*}
\sup_{x \in \mathbb{R}}\left|F_{n,m}(x) - F(x)\right| \stackrel{a.s.}{\rightarrow} 0
\end{align*}
as $m \rightarrow \infty$, where $F_{n,m}$ is the empirical CDF associated with the random sample $S_{n,1}, S_{n,2}, \ldots, S_{n,m}$, i.e.
\begin{align*}
F_{n,m}(x) := \frac{1}{m}\sum_{k=1}^{m}\mathbf{1}(S_{n,k} \leq x).
\end{align*}
Due to this almost sure convergence result, we can find a null set $N_{n}$ such that for each $\omega \in N_{n}^{c}$,
\begin{align*}
 \lim_{m \rightarrow \infty} \sup_{x \in \mathbb{R}}\left|\frac{1}{m}\sum_{k=1}^{m}\mathbf{1}(S_{n,k}(\omega) \leq x) - F(x)\right| = 0.
 \end{align*}
 Next, observe that since $n$ is an arbitrarily chosen positive integer, by defining
\begin{align*}
N := \bigcup_{n=1}^{\infty}N_{n}
,
\end{align*}
we see from Boole's inequality that $N$ is also a null set, and for each $\omega \in N^{c}$,
\begin{align*}
\lim_{m \rightarrow \infty} \sup_{x \in \mathbb{R}}\left|\frac{1}{m}\sum_{k=1}^{m}\mathbf{1}(S_{n,k}(\omega) \leq x) - F(x)\right| = 0
\end{align*}
for each $x \in \mathbb{R}$, and for each integer $n \geq 1$.  Finally, letting $M$ be a null set where for each $\omega \in M^{c}$,
\begin{align*}
\lim_{n \rightarrow \infty}T_{n}(\omega) = \infty
,
\end{align*}
we conclude that for each $\omega \in N^{c} \cap M^{c}$,
\begin{align*}
\lim_{m \rightarrow \infty}\frac{1}{m}\sum_{n=1}^{A(t)}\sum_{k=1}^{B_{n}^{(m)}(\omega)}\mathbf{1}(S_{n,k}(\omega) > t - T_{n}(\omega)) &= \sum_{n=1}^{A(t)}\lim_{m \rightarrow \infty}\frac{\lceil m B_{n}(\omega) \rceil}{m}\frac{1}{B_{n}^{(m)}}\sum_{k=1}^{B_{n}^{(m)}}\mathbf{1}(S_{n,k}(\omega) > t - T_{n}(\omega) \\ &= \sum_{n=1}^{A(t)}B_{n}\overline{F}(t - T_{n}(\omega))) \end{align*}
which proves the claim.
\hfill\Halmos \endproof

Let us note that although we have restricted our attention to the queue length process here, the corresponding limit of the departure follows immediately from Theorem~\ref{QueueWP1}.

\subsection{Limit of the Workload Process}

In our next result, we show that $\{\overline{W}_{m}(t); t \geq 0\}$ exhibits a similar type of convergence. For each integer $m \geq 1$, and each real $t \geq 0$, we define
\begin{align*}
\overline{W}_{m}(t) := \frac{W_{m}(t)}{m}.
\end{align*}
One might also think that the almost sure convergence of $\overline{W}_{m}(t)$ to $Z^w(t)$ should follow obviously from the strong law of large numbers, but after writing out $\overline{W}_{m}(t)$, we find
\begin{align*}
W_{m}(t) := \sum_{n=1}^{A(t)}\sum_{k=1}^{B_{n}^{(m)}}(S_{n,k} - (t - T_{n}))^{+}
\end{align*}
and the presence of the $T_{n}$ terms within the indicator function again keeps us from applying the strong law directly, since $\{T_{n} + S_{n,k}\}_{k \geq 1}$ do not necessarily form an i.i.d.~sequence. By comparison to the batch scaling of the queue, the Glivenko-Cantelli Theorem cannot be applied in this case. However, we can use a different approach that now relies on the fact that the CDF of the stationary excess distribution $F^{e}$ is continuous on $\mathbb{R}$, which need not be the case for $F$ itself.

\begin{theorem} \label{WorkloadWP1}
For each $t \geq 0$, we see that as $m \rightarrow \infty$,
\begin{align*}
\overline{W}_{m}(t) \stackrel{\mathsf{a.s.}}{\longrightarrow} Z^{w}(t) .
\end{align*}
\end{theorem}

\proof{Proof.}
Fix a real $t \geq 0$.  Given an integer $n \geq 1$, observe that for each rational $r \in \mathbb{Q}$,
\begin{align*}
\frac{1}{m}\sum_{k=1}^{m}(S_{n,k} - r)^{+} \stackrel{\mathsf{a.s.}}{\longrightarrow}\mathbb{E}[(S_{n,1} - r)^{+}].
\end{align*}
Due to the fact that the rationals form a countable set, we can see from the ordinary Strong Law of Large Numbers that there exists a null set $N_{n}$ such that for each $\omega \in N_{n}^{c}$,
\begin{align*}
\lim_{m \rightarrow \infty}\frac{1}{m}\sum_{k=1}^{m}(S_{n,k} - r)^{+} = \mathbb{E}[(S_{n,1} - r)^{+}]
\end{align*}
for each rational $r \in \mathbb{Q}$.  The same can be said for each integer $n \geq 1$:  setting then
\begin{align*}
N := \bigcup_{n=1}^{\infty}N_{n}
\end{align*}
we see that $N$ is a null set, and for each $\omega \in N^{c}$,
\begin{align*}
\lim_{m \rightarrow \infty}\frac{1}{m}\sum_{k=1}^{m}(S_{n,k} - r)^{+} = \mathbb{E}[(S_{n,1} - r)^{+}]
\end{align*}
for each rational $r \in \mathbb{Q}$, and each integer $n \geq 1$.

Given $\omega \in N^{c}$, we can see that for each integer $n \geq 1$ satisfying $T_{n}(\omega) \leq t$, we can find rational numbers $a_{n}, b_{n}$ satisfying $a_{n} < t - T_{n}(\omega) < b_{n}$, where
\begin{align*}
\mathbb{E}[S_{n,1}]\overline{F}^{e}(b_{n}) &= \lim_{m \rightarrow \infty}\frac{1}{m}\sum_{k=1}^{m}(S_{n,k}(\omega) - b_{n})^{+} \\ &\leq \liminf_{m \rightarrow \infty}\frac{1}{m}\sum_{k=1}^{m}(S_{n,k}(\omega) - (t - T_{n}(\omega))^{+} \\ &\leq \limsup_{m \rightarrow \infty}\frac{1}{m}\sum_{k=1}^{m}(S_{n,k}(\omega) - (t - T_{n}(\omega)))^{+} \\ &\leq \lim_{m \rightarrow \infty}\frac{1}{m}\sum_{k=1}^{m}(S_{n,k}(\omega) - a_{n})^{+} = \mathbb{E}[S_{n,1}]\overline{F}^{e}(a_{n}).
\end{align*}
Since $a_{n}$ and $b_{n}$ were chosen arbitrarily, we can let $a_{n}$ approach $t - T_{n}(\omega)$ from below, and let $b_{n}$ approach $t - T_{n}(\omega)$ from above to conclude that
\begin{align*}
\lim_{m \rightarrow \infty}\frac{1}{m}\sum_{k=1}^{m}(S_{n,k}(\omega) - (t - T_{n}(\omega)))^{+} = \mathbb{E}[S_{n,1}]\overline{F}^{e}(t - T_{n})
\end{align*}
where we also made use of the continuity of $\overline{F}^{e}$.  Once we intersect $N^{c}$ with the set on which $T_{n} \rightarrow \infty$ as $n \rightarrow \infty$ (assured for Poisson processes or any simple point process {that does not explode in finite time}), we conclude that
\begin{align*}
\overline{W}_{m}(t) \stackrel{\mathsf{a.s.}}{\longrightarrow} Z^{w}(t)
\end{align*}
as $m \rightarrow \infty$.
\hfill\Halmos \endproof

\noindent \textbf{Remark} Readers should observe that the proof we used to establish Theorem \ref{WorkloadWP1} cannot be used to prove Theorem \ref{QueueWP1}, unless we further assume $F$ does not have any jumps.

 What is especially notable about the batch scaling limit is that even though both $\{\overline{Q}_{m}(t); t \geq 0\}$ and $\{\overline{W}_{m}(t); t \geq 0\}$ converge to shot-noise processes, the two shot-noise processes have different decay patterns:  the decay pattern associated with the scaling-limit of the queue-lengths is $\overline{F}(t)$, while the decay pattern associated with the scaling-limit of the workload processes is $\mathbb{E}[S_{1}]\overline{F}^{e}(t)$.  We can also see that, when $F$ corresponds to the CDF of an exponential random variable with rate $\mu$, the two scaling limits actually coincide if we further multiply $\overline{W}_{m}(t)$ by $\mu$.

As we have discussed, this section has stood apart from the rest of the paper for its lack of time-variation. The following corollary shows that we can relax both Theorems \ref{QueueWP1} and \ref{WorkloadWP1} to some extent, and allow for service time distributions to be time-varying in a certain manner that should cover most practical applications.

\begin{corollary} \label{TimeVaryingWP1}
Suppose there exists a strictly increasing sequence of real numbers $\{s_{k}\}_{k \geq 0}$ satisyfing $s_{k} \rightarrow \infty$ as $k \rightarrow \infty$, $s_{0} = 0$, and for each integer $k \geq 1$, whenever a batch of customers arrive at a time $s \in [s_{k-1}, s_{k})$, each customer in that batch possesses an amount of work with CDF $F_{k}$ that is independent of all other works in the batch, as well as the size of the batch.  Under these conditions, it follows that for each $t \geq 0$,
\begin{align*}
\overline{Q}_{m}(t) \stackrel{\mathsf{a.s.}}{\longrightarrow} Z(t), ~~~~~ \overline{W}_{m}(t) \stackrel{\mathsf{a.s.}}{\longrightarrow} Z^{w}(t)
\end{align*}
as $m \rightarrow \infty$.
\end{corollary}

\proof{Proof.}
The proofs of both Theorems \ref{QueueWP1} and \ref{WorkloadWP1} can be modified in an obvious way, by relating to each interval $[s_{j-1}, s_{j})$ a doubly-indexed collection of i.i.d. random variables $\{S_{n,k}^{(j)}\}_{n \geq 1, k \geq 1}$ having CDF $F_{j}$, then using this doubly-indexed sequence to `feed' the $m$th queueing system with service times.
\hfill\Halmos \endproof

Readers should note that our proofs of Theorems \ref{QueueWP1} and \ref{WorkloadWP1} will no longer be valid if we let $F_{t}$ vary too much with respect to $t$.  If we try to apply that approach to each of these settings, we run into issues with constructing a suitable null set because our construction will lead to an uncountable number of null sets. Addressing this is thus left as an interesting direction of future work.

\section{Conclusion}\label{concSec}


A fundamental takeaway of our work is expanded understanding of the remarkable tractability granted to queues with Poisson arrivals and infinitely many servers. Even though we have allowed the arrival rates, batch size distributions, and service duration distributions to be broadly time-varying (and furthermore allowed arbitrary dependence among service durations within a batch, these two assumptions on the arrival process and the number of servers alone provide a decomposition into the sum of independent Poisson random variables. In Theorem~\ref{SACTQandD} we proved this representation for a single point in time, and in Theorem~\ref{SACTQandD-FD} we extended to multiple epochs for perspectives finite-dimensional distributions.

While this decomposition is interesting in its own right, we have shown here that it is also quite valuable methodologically. That is, in this paper we have also analyzed various transforms and performance metrics of the stochastic process, and the proof techniques we have used were built upon the decomposition we have found. For example, in Theorem~\ref{MainLSTresult} we used the multi-point decomposition to find the joint LST of the queue and departure processes across multiple epochs, in addition to the auto-covariances of the queue and departure processes. We hope that these results and, perhaps even more so, the techniques used to find them will be of use in our motivating applications, such as the management of autonomous vehicles, multi-server jobs, and microservices.

At the core of this paper, we have been interested in generalizing prior results for batch arrival infinite server queues. In Section~\ref{limitSec}, we extended the batch scaling limits first shown by \citet{de2017shot}, which connected the batch arrival queues with shot-noise processes. Prior results established weak convergence of these limits, and here we have generalized this to hold almost surely. We prove these limits for both the queue length and workload processes, and the proof techniques differ for each. For the queue, Theorem~\ref{QueueWP1} uses the Glivenko-Cantelli theorem, whereas Theorem~\ref{WorkloadWP1} leverages the continuity of the stationary excess distribution. As we have noted, these proof techniques actually do not require the Poisson arrivals assumption, and thus Theorems~\ref{QueueWP1} and~\ref{WorkloadWP1} will hold for any simple point process.

Naturally, interesting directions of future work lie in relaxing each of these two fundamental assumptions, the Poisson arrivals and the infinitely many servers, that are the pillars of the model's tractability. For example, we are interested in aiming to extend this method of decomposition to batch arrival queues with arrival processes that are not Poisson. For arrival processes that are not Poisson processes but may be closely related, such as Cox processes, it may be promising to try to leverage this near-Poisson-ness, and thus gain insight into the distribution of the batch arrival queues overall. Then, on the other hand, we have recently studied the staffing problem for the batch arrival multi-server queue, and this has invoked batch scaling limits of the queue \citep{daw2021staff}. Thus, we are also quite interested to see what the almost sure convergence from Theorems~\ref{QueueWP1} and~\ref{WorkloadWP1} can add to this analysis.

\section*{Acknowledgements}

This research was initiated when Andrew Daw was a doctoral student at Cornell, at which time he was supported by the National Science Foundation through a Graduate Research Fellowship under grant DGE-1650441.

\bibliographystyle{plainnat}
\bibliography{BatchNS}

\end{document}